\newtheorem*{condition}{Condition}
\newtheorem{theorem}{Theorem}
\newtheorem{lemma}{Lemma}
\begin{document}

\title{Bertrand's Postulate for Carmichael Numbers}
\author{Daniel Larsen}
\date{}
\maketitle

\begin{abstract}
Alford, Granville, and Pomerance proved in \cite{AGP:1994} that there are infinitely many Carmichael numbers. In the same paper, they ask if a statement analogous to Bertrand's postulate could be proven for Carmichael numbers. In this paper, we answer this question, proving the stronger statement that for all $\delta>0$ and $x$ sufficiently large in terms of $\delta$, there exist at least $e^{\frac{\log x}{(\log\log x)^{2+\delta}}}$ Carmichael numbers between $x$ and $x+\frac{x}{(\log x)^{\frac{1}{2+\delta}}}$.
\end{abstract}

\section{Introduction}
A composite number $n$ which divides $a^n-a$ for every integer $a$ is called a Carmichael number (named after Robert Carmichael, although the concept is decades older \cite{S:1885}). In a celebrated paper, Alford, Granville, and Pomerance proved that there are infinitely many Carmichael numbers \cite{AGP:1994}. In fact, they were able to prove a much stronger result, namely that there exist at least $x^{2/7}$ of them up to sufficiently large $x$. Despite such a strong lower bound on the number of Carmichael numbers up to $x$ (which has subsequently been improved to above $x^{1/3}$ \cite{H:2008}), little is currently known about their distribution. Indeed, in their paper, Alford, Granville, and Pomerance note, ``It seems to be difficult to prove a `Bertrand's postulate for Carmichael numbers,' that is, that there is always a Carmichael number between $x$ and $2x$ once $x$ is sufficiently large.'' In this paper, we settle this question, in fact proving that the ratio of successive Carmichael numbers tends to 1.

The method of Alford, Granville, and Pomerance is to choose $L$ such that $(\mathbb{Z}/L\mathbb{Z})^{\times}$ has no elements of large order and then find $k$ such that there are many primes of the form $dk+1$ where $d$ divides $L$. For the rest of the argument to work, it is important that the number of divisors which produce primes is much bigger than the order of any element in the multiplicative group mod $L$. The maximal order of elements of $(\mathbb{Z}/L\mathbb{Z})^{\times}$, assuming $L$ is square-free, is the least common multiple of the numbers of the form $p-1$ where $p$ is a prime divisor of $L$. Therefore, $L$ is (roughly speaking) chosen to be the product of primes less than $y$ such that $p-1$ possesses only small prime factors.

The next step is to use character theory to show that there are subsets of the primes of the form $dk+1$ such that their product is 1 mod $L$. Call one such product $\Pi$. Assuming $k$ is relatively prime to $L$, $\Pi$ is in fact 1 mod $kL$ (since each prime factor of $\Pi$ is 1 mod $k$). By Korselt's criterion \cite{K:1899}, a square-free number is a Carmichael number if and only if $p-1\mid n-1$ whenever $p\mid n$. Since $dk\mid kL$, it follows that $\Pi$ is a Carmichael number.

A critical step of their argument is finding a suitable $k$. To do this, they look at all the pairs $(k,d)$ such that $dk+1$ is prime, where $k$ is drawn from some bounded interval and $d$ is chosen from $\bold{D}$, some subset of the divisors of $L$. Assuming $\bold{D}$ contains enough elements, there exists a particular $k$ with many primes of the form $dk+1$ $(d\in \bold{D})$ by the pigeonhole principle. This is a rough sketch of the method used by Alford, Granville, and Pomerance, although of course their actual method is considerably more complicated.

Unfortunately, this approach cannot be used to produce Carmichael numbers in short intervals. This is because the use of the pigeonhole principle relinquishes control over which elements of $\bold{D}$ correspond to primes. In the extreme case, this could result in all the primes being almost exactly the same size, in which case, there might not be any products in a particular short interval.

Here is a flawed approach which nevertheless points in the right direction. Split the divisors of $L$ into pairs of the form $(d_i,d'_i)$ with the property that $d_i>2d'_i$. (Of course, some divisors will need to be discarded to make this work, but that is of no concern.) Then use the pigeonhole principle on triples $(k,d_i,d'_i)$ where both $d_ik+1$ and $d'_ik+1$ are prime. This guarantees that no more than half of the divisors corresponding to primes will be of the same general size. In fact, as long as $k$ is chosen properly, the products of the primes will spread out evenly.

The problem with this method is there are no known lower bounds on the number of $k$ such that $dk+1$ and $d'k+1$ are both prime for fixed $d$ and $d'$. However, there has been considerable progress in this direction. In a landmark 2014 paper \cite{Z:2014}, Zhang showed that there exists $h\leq 70000000$ such that $x$ and $x+h$ are simultaneously prime infinitely often. Then in a further breakthrough, Maynard \cite{M:2015} and independently Tao (unpublished) found a different and more flexible approach to the problem, which allowed more general results to be proven.

The connection between prime tuples and Carmichael numbers has long been known. In 1939, Chernick observed that $(6k+1)(12k+1)(18k+1)$ is a Carmichael number whenever those three factors are all prime \cite{C:1939}. The prime tuples conjecture (or rather a slightly more general statement known as Dickson's conjecture) asserts that every finite set of linear forms can be made simultaneously prime infinitely often unless the set is inadmissible, which means that some prime divides all the values of their product at integers. Clearly, then, the prime tuples conjecture would imply that there are infinitely many Carmichael numbers. In fact, it would imply something stronger (and as yet unproven), that there are infinitely many Carmichael numbers with a bounded number of prime factors. This result can actually be proven under the assumption of hypotheses which are much weaker than the prime tuples conjecture, although still unproven. (See \cite{W:2019}.) However, to the author's knowledge, none of the unconditional progress towards the prime tuples conjecture had ever been used to answer questions regarding Carmichael numbers before this paper.

We are now in a position to revise our previous strategy. We split the divisors into $M$-tuples for some large number $M$, instead of just into pairs. A 2016 paper of Maynard \cite{M:2016}, which extends his previous work, will then give us lower bounds on the number of triples $(k,d,d')$ with $d$ and $d'$ in the same $M$-tuple and $dk+1$ and $d'k+1$ both prime, producing enough triples that the pigeonhole principle will be able to find a specific $k$ with many primes of the form $dk+1$. Crucially, this can be done while making sure the primes do not cluster around powers of any $\alpha>1+\epsilon$. (Here $\epsilon$ represents some positive quantity which goes to zero as $y$ (the starting parameter) goes to infinity.) Then a standard argument from Fourier analysis will guarantee Carmichael numbers of controllable size. By slowly sliding $y$ to infinity, the following theorem can be proven.
\begin{theorem}
\label{main}
For all $\delta>0$ and $x$ sufficiently large in terms of $\delta$, there are at least $e^{\frac{\log x}{(\log\log x)^{2+\delta}}}$ Carmichael numbers between $x$ and $x+\frac{x}{(\log x)^{\frac{1}{2+\delta}}}$.
\end{theorem}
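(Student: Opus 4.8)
The plan is to carry out the programme sketched above, with a parameter $y\to\infty$ driving the whole construction and the value $x$ recovered only at the end by letting $y$ vary. First, fixing $\delta>0$, I would for each large $y$ choose a squarefree $L=L(y)$ that is a product of primes $p<y$ for which $p-1$ has no prime factor exceeding a fixed small power $y^{\beta}$, with $\beta=\beta(\delta)$ sufficiently small. The estimates on smooth shifted primes used in \cite{AGP94} keep $\lambda(L)=\mathrm{lcm}\{\,p-1:p\mid L\,\}$ below $\exp(y^{\beta})$ while keeping $\omega(L)$, hence $\log\tau(L)$, of size $y^{1-o(1)}$; in particular, writing $G=(\mathbb{Z}/L\mathbb{Z})^{\times}$, its Davenport constant satisfies $D(G)\le\lambda(L)(1+\log L)\le\exp(y^{\beta})$, far below the number of primes we will manufacture. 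I would then fix a large set $\mathcal{D}$ of \emph{small} divisors of $L$ --- those below $M=\exp\bigl((\log\log x)^{2+\delta'}\bigr)$ for a suitable $\delta'<\delta$, a range in which $L$ still has an abundant supply of divisors --- and partition it into $m$ thin classes $\mathcal{D}=\bigsqcup_{j\le m}\mathcal{D}_j$ according to the size of $\log d$, with $m\to\infty$.

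The heart of the proof is the sieve step. To the linear forms $f_d(n)=dn+1$, $d\in\mathcal{D}$, I would apply Maynard's 2016 results \cite{M16} to conclude that a positive proportion of the pairs $(d,d')$ with $d,d'$ in distinct classes are \emph{good}, meaning that $\gg K(\log K)^{-O(1)}$ integers $k\le K$ have $dk+1$ and $d'k+1$ simultaneously prime. Counting the triples $(k,d,d')$ with $(d,d')$ good and $k$ coprime to $L$ in a dyadic window $[K,2K]$ in two ways, the pigeonhole principle produces a single such $k$ for which the set $P=\{p_1,\dots,p_r\}$ of primes $p_i=d_ik+1$ meets $\gg m(\log K)^{-O(1)}$ of the classes; since then no class holds more than an $O\bigl((\log K)^{O(1)}/m\bigr)$-fraction of $P$, the logarithms $\log p_i$ occupy $\gg r$ distinct scales, and in particular for any fixed $\alpha>1+\epsilon$ a positive fraction of them avoid the arithmetic progression $\{\,n\log\alpha:n\ge1\,\}$ --- exactly the non-clustering that the flawed two-set version could not deliver. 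Keeping $K$ and $M$ small forces $\log p_i\ll(\log\log x)^{2+\delta'}$ with $\log k$ subordinate; one arranges $m$, hence $r$, to be of order $\log x/(\log\log x)^{2+\delta'}$, which lies well within the available supply of small divisors yet comfortably exceeds $D(G)$.

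Now comes the extraction of the Carmichael numbers. Since $r>D(G)$, the zero-sum (van Emde Boas--Kruyswijk) argument of \cite{AGP94} produces at least $2^{\,r-D(G)}$ subsets $S\subseteq P$ with $\prod_{p\in S}p\equiv1\pmod L$; for any such $S$ with $|S|\ge3$ (those of size $\le2$, a negligible $O(r^2)$, are discarded) the product $\Pi_S=\prod_{p\in S}p$ is, by the congruence chain recalled in the introduction ($p\equiv1\pmod k$, $\gcd(k,L)=1$, so $\Pi_S\equiv1\pmod{kL}$, and $p-1=d_ik\mid kL\mid\Pi_S-1$) together with Korselt's criterion, a Carmichael number, and distinct $S$ give distinct $\Pi_S$. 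It remains to count those $S$ for which $\log\Pi_S=\sum_{p\in S}\log p$ lands in a prescribed window $I$ of length $\eta$ inside $[\tfrac{1}{3}\Lambda,\tfrac{2}{3}\Lambda]$, where $\Lambda=\sum_{p\in P}\log p$. I would detect the congruence with the characters of $G$ and the interval with a smooth Fourier minorant $g$ of $\mathbbm{1}_I$, so that the count is $\ge\phi(L)^{-1}\sum_{\chi\in\widehat{G}}\int\widehat{g}(\xi)\prod_{i=1}^{r}\bigl(1+\chi(p_i)e^{2\pi i\xi\log p_i}\bigr)\,d\xi$. The principal character contributes the main term, which --- since $\widehat{g}(0)\asymp\eta$ and the subset sums $\sum_{i\in S}\log p_i$ are spread at scale $\asymp\Lambda/\sqrt r$ --- is at least $2^{\,r/2}\eta/\bigl(\phi(L)\Lambda\bigr)$, already exceeding the target $e^{\log x/(\log\log x)^{2+\delta}}$ once $\eta$ is allowed to be as small as $(\log x)^{-1/(2+\delta)}$, because $\log\phi(L)$, $\log\Lambda$ and $\log(1/\eta)$ are all $o(r)$. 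Each non-principal $\chi$ contributes at most $\phi(L)^{-1}\int|\widehat{g}(\xi)|\,2^{r}e^{-cN(\chi,\xi)}\,d\xi$, where $N(\chi,\xi)$ is the number of indices $i$ at which $\chi(p_i)e^{2\pi i\xi\log p_i}$ is bounded away from $1$; the rapid decay of $\widehat{g}$ handles large $|\xi|$, and for the rest one invokes that the $\log p_i$ occupy $\gg r$ separated scales and avoid every progression $\{n\log\alpha\}$ with $\alpha>1+\epsilon$ to get $N(\chi,\xi)\gg r$ --- this is precisely the anticoncentration of the random subset sum $\sum_i X_i\log p_i$. Summing over the $\phi(L)$ characters leaves the total error below half the main term.

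Finally I would let $y$ slide to infinity. For each $y$ the construction yields many Carmichael numbers in an interval $[\,x_y,\,x_y(1+\eta_y)\,]$, and since $x_y$ is free to range over a (doubly-exponentially) long band in $y$, the bands for consecutive $y$ overlap and cover all sufficiently large reals; the bookkeeping then gives $r\gg\log x/(\log\log x)^{2+\delta'}$, hence more than $e^{\log x/(\log\log x)^{2+\delta}}$ of the numbers produced, inside a window of relative length $\eta_y$ as small as $(\log x)^{-1/(2+\delta)}$, which (using $\delta'<\delta$) is exactly the assertion, with $x_{\delta}$ effectively computable because every input --- Maynard's theorem, the shifted-prime estimates, the character and Fourier bounds --- is effective. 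I expect the sieve step to be the main obstacle: squeezing out of Maynard's machinery a lower bound for the number of triples $(k,d,d')$ strong enough that the pigeonhole isolates a single $k$ meeting $\gg m$ thin size classes --- all while keeping $K$ and the divisors small --- is far more delicate than the plain pigeonhole on pairs $(k,d)$ in \cite{AGP94}, and it is here that the recent progress on prime tuples is indispensable, since classically one cannot even lower-bound the number of $k\le K$ for which a single pair $dk+1,\,d'k+1$ is simultaneously prime. The subsidiary difficulty is the small-frequency part of the Fourier estimate --- genuine anticoncentration of $\sum_i X_i\log p_i$ at scale $\eta$ --- which is exactly what compels the size-class control built into the choice of $P$.
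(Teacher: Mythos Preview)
Your architecture matches the paper's --- smooth-shifted $L$, Maynard's sieve on tuples $\{d_ik+1\}$, pigeonhole to fix a single $k_0$, a character-plus-Fourier count of subset products in a short window, then sliding $y$ --- but the anti-clustering step, which is the crux, has a genuine gap. You assert that once the pigeonhole yields a $k$ whose primes meet $\gg m/(\log K)^{O(1)}$ size classes, ``no class holds more than an $O((\log K)^{O(1)}/m)$-fraction of $P$'' and hence ``for any fixed $\alpha>1+\epsilon$ a positive fraction avoid $\{n\log\alpha\}$.'' Neither inference holds: meeting many classes does not bound the mass of any one class (nearly all the primes could sit in a single class with the remaining classes contributing one prime each), and even a perfectly even spread of the $\log p_i$ across many narrow classes does not stop them all from lying near some progression $\{n\log\alpha\}$ once $\log\alpha$ is small compared to the class width. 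The Fourier bound, however, needs the anti-clustering \emph{uniformly} over all frequencies $\omega$ up to $\asymp A\log\Xi$ (equivalently over all $\alpha$ down to $1+\epsilon$ with $\epsilon\to0$), not a statement for each fixed $\alpha$ separately; your $N(\chi,\xi)\gg r$ is therefore unjustified.

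The paper's mechanism is different and does not rely on scale-spreading. Each $\mathbf{S}_j$ consists of $M$ divisors with pairwise logarithmic spread exceeding~$1$; Maynard's theorem (with $m=2$) produces, for each $j$ separately, many $k$ with at least two primes among $\{dk+1:d\in\mathbf{S}_j\}$. \emph{Before} pigeonholing, a Diophantine lemma (Lemma~\ref{clustering}) excises those $k$ for which some $|\omega|<W$ makes \emph{both} primes of the pair lie within $1/V$ of integers: two numbers with log-gap $>1$ can simultaneously satisfy this only for $k$ confined to an interval of length $\ll Y(\log Y)^2/(mV)$ depending on the integer pair $(m,m')$, and summing over the $O(W^2\log^2 Y)$ possible pairs gives $|\mathbf{U}(V,W)|\ll (W/V)\,Y\log^5 Y$, negligible when $W\le V/(\log Y)^{M+5}$. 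One pigeonholes over $j$ only on the surviving $k$; the resulting $k_0$ then carries, for each contributing $j$, a \emph{pair} of primes at most one of which can lie near $\mathbb{Z}/\omega$ for any given $\omega$ in the range. Hence at most half of all primes in $\mathbf{Q}$ cluster, uniformly in $\omega$, which is exactly the hypothesis fed into the Fourier theorem. The paired structure together with the pre-pigeonhole Diophantine excision is the missing idea in your outline; contrary to your closing remark, it is this step rather than the invocation of Maynard that is the delicate point.
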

\section{Preliminaries}
To start with, we give some standard notation in analytic number theory. Let $\mathbb{P}$ be the set of primes. Let $(a,b)$ be the greatest common divisor of $a$ and $b$, and let $\phi(n)=\#\{0<m<n: (m,n)=1\}$. Let \[\pi(x)=\sum_{p\leq x}1,\] \[\pi(x; q, a)=\sum_{\substack{p\leq x\\ p\equiv a\pmod{q}}}1,\] and \[E(x; q, a)=\Big\vert \pi(x; q, a)-\frac{\pi(x)}{\phi(q)}\Big\vert.\] In the above expressions, $p$ refers to a prime. Indeed, throughout the paper, $p$ will always represent a prime. In contrast, $q$ in the above expressions is not necessarily prime.

The final arithmetic function we will use is Carmichael's $\lambda$ function, defined on $n$ as the largest order of an element that appears in $(\mathbb{Z}/n\mathbb{Z})^{\times}$. We start with a standard result about $\phi$.
\begin{lemma}
\label{Phi}
\[\sum_{k\leq x}\frac{1}{\phi(k)}=O(\log x).\]
\vskip -30pt
\end{lemma}
\begin{proof}
This is proved in, for example, Lemma 12.1 of \cite{LP:2019}, where it is further shown that the constant of proportionality is $\frac{\zeta(2)\zeta(3)}{\zeta(6)}$ where $\zeta$ is the Riemann zeta function.
\end{proof}
Maynard's sieve requires a version of the Bombieri-Vinogradov theorem. The standard error term of $\frac{x}{(\log x)^A}$ won't suffice for the purposes of this paper. However, a better bound can be attained if the effects of at most one Siegel zero are disregarded.
\begin{theorem}
\label{BV}
There exists a positive constant $C$ such that for every sufficiently large value of $x$, there exists an integer $s\in [\sqrt{\log x},e^{\sqrt{\log x}}]$ with \[\sum_{\substack{q\leq x^{2/5}\\ s\nmid q}}\max_{2\leq t\leq x}\max_{(a,q)=1}\Big\vert\pi(t; q,a)-\frac{\pi(t)}{\phi(q)}\Big\vert\leq \frac{x}{e^{C\sqrt{\log x}}}.\]
\vskip -30pt
\end{theorem}
\begin{proof}
This follows from Theorem 11.2 from \cite{LP:2019} by summation by parts.
\end{proof}
A result very much like Theorem \ref{BV} also appears in \cite{LP:2019}, but the above version is slightly more general. For the rest of the paper, we fix a particular value of $s$ for each $x$, which we label $s_x$. Now, given a set of linear forms, consider its product, which will be some polynomial $P(x)$. If there is no prime $p$ such that $p\mid P(x)$ for all integer values of $x$ then we say that the set of linear forms is \emph{admissible}.
\begin{theorem}
\label{Maynard}
For every $m\in \mathbb{N}$ and $\epsilon>0$, there exists a positive integer $N$ such that for every admissible set \[\bold{L}=(L_1(k),L_2(k),\ldots,L_N(k))=(d_1k+c_1,d_2k+c_2,\ldots,d_Nk+c_N)\] of $N$ linear forms where $d_i>c_i>0$ for all $i$, \begin{equation}\label{theorem}\#\{x\leq k<2x: \#\{i: L_i(k)\in \mathbb{P}\}\geq m\}\geq \frac{x}{(\log x)^N}\end{equation} for sufficiently large $x$ in terms of $\bold{L}$. In fact, \eqref{theorem} holds for every $x$ with an associated $d_+$ where the pair of $x$ and $d_+$ satisfy the following properties:
\begin{enumerate}
\item $x$ is sufficiently large in terms of $m$ and $\epsilon$.
\item $d_+\geq d_i$ for all $i$.
\item $\log x\geq (\log d_+)^{2+\epsilon}$
\item $s_{2xd_+}$ (as taken from the statement of Theorem \ref{BV}) has a prime factor which does not divide $d_i$ for any $i$.
\end{enumerate}
\end{theorem}
\begin{proof}
This theorem is a particular case of Theorem 3.1 from \cite{M:2016}, corresponding to $\bold{P}=\mathbb{P}$ and $\bold{A}=\mathbb{N}$, as well as $\alpha=1$, $\theta=\frac{1}{3}$, $\delta=\frac{1}{6}$, and $B$ equal to the prime factor of $s_{2xd_+}$ which does not divide any of the $d_i$. Some work is still required, though, as various conditions must be checked. One beautiful aspect of Maynard's result is that it turns regularity conditions regarding the individual linear forms into a statement about how the linear forms interact with each other. The two conditions which need to be established for each $i$ are, letting \[\Pi(x; q, a)=\#\{d_ik+c_i\in\mathbb{P}: x\leq k<2x, \text{ }k\equiv a\pmod{q}\}\] and \[\Pi(x)=\#\{d_ik+c_i\in\mathbb{P}: x\leq k<2x\},\] that \begin{equation}\label{condition1}\sum_{\substack{q\leq x^{\theta}\\ (q,B)=1}}\max_{(d_ia+c_i,q)=1}\Big\vert \Pi(x; q, a)-\Pi(x)\frac{\phi(d_i)}{\phi(d_iq)}\Big\vert\ll \frac{\Pi(x)}{(\log x)^{100N^2}}\end{equation} and \begin{equation}\label{condition2}\frac{\phi(B)}{B}\frac{\phi(d_i)}{d_i}\Pi(x)\geq\frac{\delta x}{\log x}.\end{equation} In fact, the latter statement need only be true on average, but we will show that it is true for all $i$. The other conditions for Maynard's theorem are easily seen to be true; for example, the other two conditions of Hypothesis 1 require $\bold{A}$ to be well-distributed in arithmetic progressions, which is definitely true for $\bold{A}=\mathbb{N}$.

Fix $i$ for which we will prove equations \eqref{condition1} and \eqref{condition2}. Let \[E=\sum_{\substack{q\leq x^{1/3}\\ (q,B)=1}}\max_{(d_ia+c_i,q)=1}\Big\vert\Pi(x; q, a)-\frac{\phi(d_i)}{\phi(d_iq)}\Pi(x)\Big\vert.\] To clean up the following equations, we will drop the subscripts from $d_i$ and $c_i$. First of all, we claim that \[\Pi(x; q, a)=\pi(2dx; dq,da+c)-\pi(dx; dq, da+c).\] Indeed, $\Pi(x; q, a)$ counts the number of primes of the form $d(a+nq)+c$ with $x\leq a+nq<2x$, or equivalently, $dx+c\leq d(a+nq)+c<2dx+c$. Since $0<c<d$ by hypothesis, this is equivalent to $dx<d(a+nq)+c<2dx$, which establishes our claim. Now \begin{align*}E&=\sum_{\substack{q\leq x^{1/3}\\ (q,B)=1}}\max_{(da+c,q)=1}\Big\vert \pi(2dx; dq, da+c)-\pi(dx; dq, da+c)-\frac{\phi(d)}{\phi(dq)}\big(\pi(2dx; d,c)-\pi(dx; d, c)\big)\Big\vert\\&\leq \sum_{\substack{q\leq x^{1/3}\\ (q,B)=1}}\max_{(b,dq)=1}\Big\vert \pi(2dx; dq, b)-\pi(dx; dq, b)-\frac{\phi(d)}{\phi(dq)}\big(\pi(2dx; d,c)-\pi(dx; d, c)\big)\Big\vert\end{align*} since $c$ and $d$ are relatively prime (which follows from the admissibility of $\bold{L}$) so that for $b=da+c$ with $(da+c,q)=1$, $b$ is relatively prime to both $q$ and $d$. Now, \begin{align*}\Big\vert\pi(dx; dq, b)-\frac{\phi(d)}{\phi(dq)}\pi(dx; d, c)\Big\vert&\leq \Big\vert \pi(dx; dq, b)-\frac{\pi(dx)}{\phi(dq)}\Big\vert+\frac{\phi(d)}{\phi(dq)}\Big\vert\pi(dx; d, c)-\frac{\pi(dx)}{\phi(d)}\Big\vert\\&=E(dx; dq, b)+\frac{\phi(d)}{\phi(dq)}E(dx; d, c),\end{align*} and the analogous inequality with $2dx$ instead of $dx$ holds as well. Hence \[E\leq  \sum_{\substack{q\leq x^{1/3}\\ (q,B)=1}}\max_{(b,dq)=1}\big(E(2dx; dq, b)+E(dx; dq, b)\big)+\big(E(2dx; d, c)+E(dx; d, c)\big)\sum_{\substack{q\leq x^{1/3}\\ (q,B)=1}}\frac{\phi(d)}{\phi(dq)}.\] By definition, $B$ is relatively prime to $d$, so the first part of the above expression is bounded by \[\sum_{\substack{q\leq dx^{1/3}\\ (q,B)=1}}\max_{(b,q)=1}\big(E(2dx; q, b)+E(dx; q, b)\big),\] while the second part is less than \[\big(E(2dx; d,c)+E(dx; d, c)\big)\sum_{q\leq x^{1/3}}\frac{1}{\phi(q)}\] since $\phi(d)\phi(q)\leq \phi(dq)$. For $x\leq z\leq 2x$, Theorem \ref{BV} implies that \begin{align*}\sum_{\substack{q\leq dx^{1/3}\\ (q,B)=1}}\max_{(a,q)=1}E(dz; q, a)&\leq \sum_{\substack{q\leq (2d_+x)^{2/5}\\ (q,B)=1}}\max_{2\leq t\leq 2d_+x}\max_{(a,q)=1}E(t; q, a)\leq \frac{2d_+x}{e^{C\sqrt{\log(2d_+x)}}}\\&\leq \frac{2x}{e^{C\sqrt{\log x}-(\log x)^{\frac{1}{2+\epsilon}}}}\ll \frac{x}{(\log x)^{100N^2+1}}.\end{align*} Additionally, \[E(dz; d, c)<\sum_{\substack{q\leq dx^{1/3}\\ (q,B)=1}}\max_{(a,q)=1}E(dz; q, a)\ll \frac{x}{(\log x)^{100N^2+1}}\] for $x\leq z\leq 2x$. Putting everything together (and using Lemma \ref{Phi}), \begin{align*}E&\leq\sum_{\substack{q\leq dx^{1/3}\\ (q,B)=1}}\max_{(b,q)=1}\big(E(2dx; q, b)+E(dx; q, b)\big)+\big(E(2dx; d,c)+E(dx; d, c)\big)\sum_{q\leq x^{1/3}}\frac{1}{\phi(q)}\\&\ll\frac{x}{(\log x)^{100N^2+1}}+\frac{x}{(\log x)^{100N^2+1}}+\log x\Big(\frac{x}{(\log x)^{100N^2+1}}+\frac{x}{(\log x)^{100N^2+1}}\Big)\\&\ll \frac{x}{(\log x)^{100N^2}}.\end{align*} This shows that Hypothesis 1 is satisfied. Additionally, \begin{align*}\Pi(x)&=\pi(2dx; d, c)-\pi(dx; d, c)\geq \frac{\pi(2dx)-\pi(dx)}{\phi(d)}-E(2dx; d, c)-E(dx; d, c)\\&>\frac{dx}{2\phi(d)\log x}-O\Big(\frac{x}{(\log x)^{100N^2+1}}\Big)>\frac{dx}{3\phi(d)\log x}\end{align*} for sufficiently large $x$, using the Prime Number Theorem. Since $B$ is a prime, $\phi(B)\geq \frac{B}{2}$ so \[\frac{\phi(B)}{B}\frac{\phi(d)}{d}\Pi(x)>\frac{x}{6\log x}.\] We have thus established equations \eqref{condition1} and \eqref{condition2}. The theorem now follows from Theorem 3.1 of \cite{M:2016} by taking $N$ to be large compared to the $C$ corresponding to $\alpha=1$ and $\theta=\frac{1}{3}$ in Maynard's result.
\end{proof}
\section{Choosing Primes}
For a given set $\bold{S}$, let $\bold{D}_{\bold{S}}=\{d: d\mid \prod_{s\in\bold{S}}s\}$ and $n\bold{S}=\{ns: s\in \bold{S}\}$. Let the \emph{logarithmic spread} of $S$ be the smallest non-zero value $\vert \log s_1-\log s_2\vert$ takes as $s_1$ and $s_2$ range over $\bold{S}$, and let the \emph{logarithmic diameter} be the largest value the same expression takes.

Now, suppose $E>0$ and $\gamma>0$ are such that \[\Big\{p\leq x: P^+(p-1)\leq x^{1-E}\Big\}\geq \frac{\gamma x}{\log x}\] for sufficiently large $x$, where $P^+(n)$ is the largest prime factor of $n$. Indeed, such values exist and can be computed effectively, as in \cite{AGP:1994}. (This, in fact, goes back to work of Erd\H{o}s \cite{E:1935}.) Unlike in the paper of Alford, Granville, and Pomerance, the exact values of $E$ and $\gamma$ will make no difference in the end, so we will not specify them. Note that the smaller $\delta$ is taken in Theorem \ref{main}, the stronger the result becomes. Therefore, for the rest of the paper, we will assume that $\delta$ is some small but positive quantity. Let $y$ be a large parameter (large in terms of both $E$ and $\delta$). Let $\Delta=\frac{\delta}{24}$. Let $M$ be the smallest power of two greater than the value of $N$ in Theorem \ref{Maynard} corresponding to $m=2$ and $\epsilon=\Delta$. Let $\Upsilon=\big\lceil e^{y^{2+2\Delta}}\big\rceil$ and \[L^{\ast}=\prod_{\substack{\frac{y}{\log y}\leq p\leq y\\ P^+(p-1)\leq y^{1-E}}}p.\] Let $p^{\ast}$ be the largest prime factor of $s_x$ (as defined following Theorem \ref{BV}) for $x=2\Upsilon (L^{\ast})^2$. Finally, let \[\bold{P}=\Big\{\frac{y}{\log y}\leq p\leq y: P^+(p-1)\leq y^{1-E},\text{ }p\neq p^{\ast}\Big\}.\] Taking $\eta=\frac{\gamma}{2}$, it is evident that $\vert \bold{P}\vert\geq \frac{\eta y}{\log y}$. We now give a combinatorial lemma about $\bold{D}_{\bold{P}}$ which explains why we took $M$ to be a power of two.
\begin{lemma}
There exists a subset of $\bold{D}_{\bold{P}}$ (containing half as many elements) which can be partitioned into $\frac{2^{\vert \bold{P}\vert}}{2M}$ sets of $M$ elements, each with logarithmic spread greater than 1.
\end{lemma}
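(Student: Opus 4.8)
The plan is to reduce the statement to a combinatorial packing fact about the Boolean lattice, via one elementary observation. Since $\bold{P}$ consists of distinct primes, $\prod_{p\in\bold{P}}p$ is squarefree, so $\bold{D}_{\bold{P}}$ is exactly the set of products of subsets of $\bold{P}$, and is in inclusion-preserving bijection with the lattice of subsets of a set of size $n:=\vert\bold{P}\vert$; in particular $\vert\bold{D}_{\bold{P}}\vert=2^n$. The key point is that any \emph{chain} for divisibility inside $\bold{D}_{\bold{P}}$ already has logarithmic spread greater than $1$: if $d\mid d'$ with $d\neq d'$, then $d'/d$ is a product of at least one prime of $\bold{P}$, so $\log(d'/d)\geq\log\bigl(\min_{p\in\bold{P}}p\bigr)\geq\log(y/\log y)>1$ for $y$ large. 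Hence it suffices to produce $\frac{2^n}{2M}$ pairwise disjoint divisibility-chains of length exactly $M$ in $\bold{D}_{\bold{P}}$ — a positive integer once $n$ exceeds $1+\log_2M$, which holds for $y$ large since $M$ depends only on $\delta$ — for then their union consists of $2^{n-1}$ elements partitioned into $\frac{2^n}{2M}$ sets of $M$ elements, each of logarithmic spread $>1$.

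To build these chains I would invoke the symmetric chain decomposition of the Boolean lattice (de Bruijn--Tengbergen--Kruyswijk): $\bold{D}_{\bold{P}}$ is partitioned into chains $C_1,\dots,C_r$, where for each $k\leq n/2$ there are $\binom nk-\binom n{k-1}$ chains of length $n-2k+1$, and each chain of length $\ell$ is saturated, occupying one element from each of the $\ell$ consecutive levels symmetric about $n/2$. From a chain $C_j$ of length $\ell_j$ one extracts $\lfloor\ell_j/M\rfloor$ disjoint sub-chains of length $M$. Writing $S=\sum_j\lfloor\ell_j/M\rfloor$ for the total number obtained, and using $\sum_j\ell_j=2^n$, we get $MS=2^n-\sum_j(\ell_j\bmod M)$, so it remains only to bound the loss $\sum_j(\ell_j\bmod M)$.

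Here I would split the sum: chains with $\ell_j\geq M$ each contribute at most $M-1$, hence at most $(M-1)\binom n{\lfloor n/2\rfloor}$ in total; chains with $\ell_j<M$ contribute their full length, but every such chain occupies only the $\leq M-1$ levels nearest $n/2$, so together they contain at most $(M-1)\binom n{\lfloor n/2\rfloor}$ elements. Therefore $\sum_j(\ell_j\bmod M)\leq 2(M-1)\binom n{\lfloor n/2\rfloor}=O\!\left(M\,2^n/\sqrt n\right)$, which is at most $2^{n-1}$ once $n=\vert\bold{P}\vert\gtrsim M^2$, hence once $y$ is large in terms of $\delta$ (recall $\vert\bold{P}\vert\geq\eta y/\log y$). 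Consequently $S\geq 2^{n-1}/M=\frac{2^n}{2M}$, and selecting any $\frac{2^n}{2M}$ of the extracted length-$M$ chains completes the proof.

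The only genuinely non-routine step is the first one — recognizing that divisibility-chains carry logarithmic spread $>1$ for free, because the least prime of $\bold{P}$ already exceeds $e$ — which converts the lemma into a uniform-length chain packing of the Boolean lattice; the main work after that is the symmetric chain decomposition together with the counting estimate above, both of which are standard. As a check on why a decomposition is actually needed: reserving $M-1$ primes $q_1,\dots,q_{M-1}$ as multipliers and taking one chain $d\mid dq_1\mid dq_1q_2\mid\cdots$ per base divisor $d$ yields only $2^{n-M+1}$ chains, which falls short of $\frac{2^n}{2M}$ as soon as $M\geq 8$, so the multiplier lattice must be exploited more efficiently than a single initial-segment chain per base.
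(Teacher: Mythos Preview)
Your proof is correct and takes a genuinely different route from the paper's. The paper never uses divisibility chains: instead it establishes a ``wide subset property'' --- that any subset of $\bold{D}_{\bold{P}}$ of size at least $2^{\vert\bold{P}\vert}/(2M)$ has logarithmic \emph{diameter} greater than $1$ --- by first checking it on a bounded-size piece $\bold{P}_0\subset\bold{P}$ (a large enough subset of $\bold{D}_{\bold{P}_0}$ must hit two different levels, and since all primes lie in $[y/\log y,\,y]$ this forces a ratio exceeding $e$), and then inducting one prime at a time up to $\bold{P}$. It then sorts $\bold{D}_{\bold{P}}$ by size into $2M$ consecutive blocks and forms $\bold{S}_j$ by picking the $j$th element of each even-indexed block; the odd-indexed blocks, being large, have diameter $>1$ and therefore separate consecutive picks.

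Your key observation --- that any divisibility chain already has logarithmic spread $>1$ because the smallest prime in $\bold{P}$ exceeds $e$ --- bypasses the wide-subset induction entirely and reduces the lemma to packing the Boolean lattice with length-$M$ chains, which you handle cleanly via the de Bruijn--Tengbergen--Kruyswijk decomposition and a loss estimate of order $M\binom{n}{\lfloor n/2\rfloor}\ll 2^n$. This is arguably more conceptual and imports a standard tool; the paper's argument is more self-contained but also more ad hoc. Either construction produces sets $\bold{S}_j$ with exactly the property the rest of the argument needs (cardinality $M$, log spread $>1$), so nothing downstream is affected.
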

\begin{proof}
Let $k$ be the smallest integer such that $2^{2k}>2M\binom{2k}{k}$. Since $M$ is fixed, we can safely assume that $y$ is arbitrarily large in terms of $k$. Let $\bold{P}_0$ be an arbitrary subset of $\bold{P}$ with $2k$ elements. Then any subset of $\bold{D}_{\bold{P}_0}$ with at least $\frac{2^{\vert\bold{P}_0\vert}}{2M}$ elements contains two elements such that one of them has strictly more prime factors than the other. When $y$ is large, such a subset of $\bold{D}_{\bold{P}_0}$ will have logarithmic diameter greater than 1, considering that the primes in $\bold{P}_0$ are within a factor of $\log y$ of each other. We will say that a set $\bold{S}$ possesses ``the wide subset property'' if every subset of $\bold{D}_{\bold{S}}$ with at least $\frac{2^{\vert\bold{S}\vert}}{2M}$ elements has logarithmic diameter greater than 1. In particular, $\bold{P}_0$ has this property.

Now let $\bold{P}_1=\bold{P}_0\cup p_1$ for some $p_1\in \bold{P}\setminus \bold{P}_0$. Suppose $\bold{R}$ is a subset of $\bold{D}_{\bold{P}_1}$ with at least $\frac{2^{\vert\bold{P}_1\vert}}{2M}$ elements. Decompose $\bold{R}$ as $\bold{R}_1\cup (p\bold{R}_2)$ where $\bold{R}_1=\bold{R}\cap \bold{D}_{\bold{P}_0}$ and $\bold{R}_2=(p^{-1}\bold{R})\cap \bold{D}_{\bold{P}_0}$. Either $\bold{R}_1$ or $\bold{R}_2$ (perhaps both) has at least $\frac{2^{\vert P_0\vert}}{2M}$ elements, and they are subsets of $\bold{D}_{\bold{P}_0}$. Hence, at least one of $\bold{R}_1$ and $\bold{R}_2$ has logarithmic diameter greater than 1, which means that $\bold{R}$ has logarithmic diameter greater than 1. Hence $\bold{D}_{\bold{P}_1}$ also has ``the wide subset property.'' By induction, $\bold{D}_{\bold{P}}$ has this property as well.

Recall that $M$ is a power of two. It therefore makes sense to define $\bold{D}_1$ as the set consisting of the $\frac{\vert \bold{D}_{\bold{P}}\vert}{2M}$ smallest elements of $\bold{D}_{\bold{P}}$, $\bold{D}_2$ as the set consisting of the next $\frac{\vert \bold{D}_{\bold{P}}\vert}{2M}$ smallest elements, and so on, resulting in sets $\bold{D}_1, \bold{D}_2, \ldots, \bold{D}_{2M}$. Let $\bold{S}_j$ be the set which contains the $j$th element of $\bold{D}_2, \bold{D}_4, \ldots, \bold{D}_{2M}$. Since $\bold{D}_{\bold{P}}$ has ``the wide subset property,'' $\bold{D}_3, \bold{D}_5, \ldots, \bold{D}_{2M-1}$ all have logarithmic diameter greater than 1. Therefore $\bold{S}_j$ has logarithmic spread greater than 1 for all $j$. As promised, we now have $\frac{\vert \bold{D}_{\bold{P}}\vert}{2M}$ sets, each containing $M$ different elements of $\bold{D}_{\bold{P}}$ and each with logarithmic spread greater than 1.
\end{proof}
Let $L=\prod_{p\in\bold{P}}p$ and $Y=\Upsilon L$. Fix $j$. Let $\bold{T}_j$ be the set of integers $k$ between $Y$ and $2Y$ with $(k,L)=1$ such that there exist $d$ and $d'$, distinct elements of $\bold{S}_j$, with both $dk+1$ and $d'k+1$ prime. Label the elements of $\bold{S}_j$ as $d_1, d_2, \ldots, d_M$. Let \[\bold{\Omega}_p=\{a\in(\mathbb{Z}/p\mathbb{Z})^{\times}: (d_ia+1,p)=1\text{ }\forall i\}\] for $p\mid L$ and then let \[\bold{\Omega}=\prod_{p\mid L}\bold{\Omega}_p.\] By the Chinese remainder theorem, each element of $\bold{\Omega}$ can be thought of as an integer $a_L$ with $1\leq a_L<L$ such that for every $p\mid L$, $a_L\equiv a_p\pmod{p}$, where $a_p$ is the element in $\bold{\Omega}_p$ which corresponds to $a_L$.
\begin{lemma}
For all $a_L\in\bold{\Omega}$, the set of linear forms $\{(d_iL)k+(d_ia_L+1)\}_{i=1}^M$ is admissible.
\end{lemma}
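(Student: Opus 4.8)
The plan is to check admissibility straight from the definition used above: a finite family of linear forms is admissible exactly when, for every prime $p$, there is an integer $n$ for which $p$ divides none of the values $L_i(n)$ (equivalently, $p$ does not divide $\prod_i L_i(n)$). So I would fix an arbitrary prime $p$ and argue separately according to whether $p\mid L$ or $p\nmid L$, exhibiting in each case a residue class of $n$ that works.

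The observation that makes both cases short is that each form factors as $(d_iL)n+(d_ia_L+1)=d_i(Ln+a_L)+1$, so the whole family is controlled by the single quantity $Ln+a_L$ together with the coefficients $d_i$. In the case $p\nmid L$ --- which, since every prime factor of $L$ lies in $[y/\log y,\,y]$, includes all small primes --- I would note that $(L,p)=1$, so $n$ can be chosen with $Ln\equiv -a_L\pmod p$; then $L_i(n)\equiv d_i\cdot 0+1\equiv 1\pmod p$ for every $i$ at once, and $p$ divides no $L_i(n)$.

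In the case $p\mid L$ I would instead invoke the construction of $\bold{\Omega}$. Here $(d_iL)n\equiv 0\pmod p$, so $L_i(n)\equiv d_ia_L+1\pmod p$ for every $n$ and every $i$; and if $a_p\in\bold{\Omega}_p$ denotes the $p$-component of $a_L$, so that $a_L\equiv a_p\pmod p$, then by the very definition of $\bold{\Omega}_p$ we have $p\nmid d_ia_p+1$, hence $p\nmid d_ia_L+1$, for all $i$. Thus $p$ divides none of the $L_i(n)$, for any $n$ at all. Putting the two cases together shows that no prime can divide $\prod_i L_i(n)$ identically in $n$, which is exactly admissibility.

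I do not anticipate a real difficulty; the only thing to be careful about is the bookkeeping --- in particular that $d_i\mid L$ forces $p\nmid d_i$ as soon as $p\nmid L$, which is exactly what lets $n\mapsto d_i(Ln+a_L)+1$ be analyzed through a single orbit of $n$ modulo $p$. (The positivity and size conditions on the coefficients $d_iL$ and $d_ia_L+1$ needed to later feed this family into Theorem \ref{Maynard} are a separate matter from admissibility and would be verified where that theorem is applied.)
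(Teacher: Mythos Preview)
Your proof is correct and follows essentially the same argument as the paper: split into the cases $p\mid L$ and $p\nmid L$, using the definition of $\bold{\Omega}_p$ in the first case and the invertibility of $L$ modulo $p$ (to solve $Ln\equiv -a_L\pmod p$) in the second. Your factoring $L_i(n)=d_i(Ln+a_L)+1$ is a clean way to present it; note that the remark about $p\nmid d_i$ is in fact not needed, since once $Ln+a_L\equiv 0\pmod p$ the value of $d_i$ modulo $p$ is irrelevant.
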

\begin{proof}
If $p\mid L$ then $p\nmid d_ia_L+1$ for all $i$, so $p$ never divides $(d_iL)k+(d_ia_L+1)$ regardless of $i$ and $k$. On the other hand, if $p\nmid L$ then $L$ has a multiplicative inverse $L'$ in $(\mathbb{Z}/p\mathbb{Z})^{\times}$, and when $k\equiv -L'a_L\pmod{p}$, \[(d_iL)k+(d_ia_L+1)\equiv d_i(-LL'a_L+a_L)+1\equiv d_i(-a_L+a_L)+1\equiv 1\pmod{p}\] for all $i$. Hence, no prime always divides one of the elements of the given set of linear forms.
\end{proof}
The reason for using this set of linear forms as opposed to $\{d_ik+1\}_{i=1}^M$ is we want to ensure that $k$ is relatively prime to $L$. However, we also can't afford to only restrict our attention to $\{(d_iL)k+(d_ia_L+1)\}_{i=1}^M$ for a particular value of $a_L$. Therefore, we will use all values of $a_L\in \bold{\Omega}$, almost breaking even as a result since \begin{align*}1-\frac{\vert \bold{\Omega}\vert}{L}&\leq 1-\prod_{p\mid L}\Big(1-\frac{M+1}{p}\Big)=1-e^{-O(1)\sum_{p\mid L}\frac{1}{p}}\\&=1-e^{-O(1)(\log\log y-\log\log\frac{y}{\log y})}=1-e^{-o(1)}=o(1).\end{align*}

Fix a particular $a_L\in \bold{\Omega}$. We apply Theorem \ref{Maynard}, taking $m=2$ and $\epsilon=\Delta$. Recall that $M$ is at least as large as the value of $N$ in the theorem corresponding to these values of $m$ and $\epsilon$. Therefore, we can use the linear forms $\{(d_iL)k+(d_ia_L+1)\}$ for $1\leq i\leq M$. We will also take $x=\Upsilon$ and $d_+=(L^{\ast})^2$. Indeed, with these choices of variables, \[e^{(\log d_+)^{2+\epsilon}}\leq e^{(\log(y^2L^2))^{2+\Delta}}<e^{(\log(y^{\frac{4y}{\log y}}))^{2+\Delta}}=e^{(4y)^{2+\Delta}}<e^{y^{2+2\Delta}}\leq x\] so the conditions of the theorem do indeed hold. Now, this implies that $\bold{T}_j$ has at least $\frac{\Upsilon}{(\log \Upsilon)^M}$ elements of the form $Lk+a_L$. We can then repeat this process for every element in $\bold{\Omega}$, getting different elements of $\bold{T}_j$ each time. Therefore, \[\vert \bold{T}_j\vert\geq \frac{\Upsilon\vert\bold{\Omega}\vert}{(\log \Upsilon)^M}=(1-o(1))\frac{\Upsilon L}{(\log \Upsilon)^M}\geq (1-o(1))\frac{Y}{(\log Y)^M}.\]

We now have a large set of primes, divided into pairs $dk+1$ and $d'k+1$. Before applying the Pigeonhole Principle to find a particular $k$ with many such pairs of primes, we need to filter out those values of $d$, $d'$, and $k$ for which $dk+1$ and $d'k+1$ ``cluster.'' To do this, we introduce two parameters, $V$ and $W$. Recall that the goal is to ensure that there is no $\alpha$ slightly greater than 1 such that the primes are very close to powers of $\alpha$. Roughly speaking, $V$ defines what ``very close'' means, while $W$ defines what ``slightly greater than 1'' means.

To make things more precise, consider, for some real numbers $V$ and $W$ which will chosen later, the set $\bold{U}_j$ of $k$ between $Y$ and $2Y$ such that there exist $d$ and $d'$, two elements of $\bold{S}_j$, with $d>d'$, as well as $\omega$ with $\vert\omega\vert<W$, such that \[\big\vert \omega\log(dk+1)-m\big\vert\leq\frac{1}{V}\] and \[\big\vert\omega\log(d'k+1)-m'\big\vert\leq\frac{1}{V}\] for some non-zero integers $m$ and $m'$. Heuristically, for any given choice of $\omega$, the chance of a random integer $k$ satisfying these two equations is $\frac{1}{V^2}$. On the other hand, this chance essentially resets every time $\omega$ goes up by $\frac{1}{V\log Y}$. Hence the chance that for a random integer $k$, there exists $\omega$ satisfying these two equations is roughly $\frac{W \log Y}{V}$. If $W$ is significantly smaller than $V$, one would expect that $\bold{U}_j$ would have relatively few elements. We now work to make this heuristic rigorous, starting by formalizing the two relevant properties that the sequences $\log(dk+1)$ and $\log(d'k+1)$ possess.
\begin{condition}[Parallel]
Two sequences $\{a_i\}_{i=1}^n$ and $\{a'_i\}_{i=1}^n$ are said to satisfy the Parallel Condition if there exists $\alpha>1$ such that $\vert a_i-a'_i-\alpha\vert\leq \frac{1}{n}$ for all $i$.
\end{condition}
\begin{condition}[Smoothness]
A sequence $\{a_i\}_{i=1}^n$ is said to satisfy the Smoothness Condition if the number of $i$ such that $\vert a_i-\beta\vert\leq \frac{1}{n}$ is at most 12 for all $\beta$.
\end{condition}
\begin{lemma}
\label{clustering}
Suppose $A>1$ is some real number and $\{a_i\}_{i=1}^n$ and $\{a'_i\}_{i=1}^n$ are sequences of real numbers between 1 and $A$ which together satisfy the Parallel Condition. Assume further that $\{a_i\}$ satisfies the Smoothness Condition. For any positive real numbers $W$ and $V$ with $VW\ll n$, the number of $i$ for which there exist $\omega$ with $\vert \omega\vert<W$ and non-zero integers $m$ and $m'$ such that \[\vert \omega a_i-m\vert\leq \frac{1}{V}\] and \[\vert \omega a'_i-m'\vert\leq \frac{1}{V}\] is $O(\frac{A^3W\log(AW)n}{V})$.
\end{lemma}
\begin{proof}
For integers $\ell$ and $m$, let $\bold{K}_{\ell,m}$ be the number of indices $i$ for which there exists $\omega$ with $\vert \omega\vert<W$ such that \[\vert \omega a_i-m\vert\leq \frac{1}{V}\] and \[\vert \omega a'_i-m+\ell\vert\leq \frac{1}{V},\] where both $m$ and $m-\ell$ are both non-zero. Fix integers $\ell$ and $m$ with $m$ and $m-\ell$ both non-zero. Suppose $i_1$ and $i_2$ are two indices with corresponding $\omega_1$ and $\omega_2$ less than $W$ in absolute value such that \[\vert \omega_k a_{i_k}-m\vert\leq \frac{1}{V}\] and \[\vert \omega_ka'_{i_k}-m+\ell\vert\leq \frac{1}{V}\] for both choices of $k\in \{1,2\}$. Then \[\vert \omega_k(a_{i_k}-a'_{i_k})-\ell\vert\leq \frac{2}{V}\] for both $k$. By assumption, there exists $\alpha>1$ such that $\vert a_i-a'_i-\alpha\vert\leq \frac{1}{n}$ for all $i$. It follows that \[\vert\omega_k\alpha-\ell\vert\leq \vert \omega_k(a_{i_k}-a'_{i_k})-\ell\vert+\vert\omega_k(a_{i_k}-a'_{i_k}-\alpha)\vert\leq \frac{2}{V}+\frac{W}{n}=O\big(\frac{1}{V}\big)\] for both $k$. In particular, \[\vert\omega_1-\omega_2\vert\leq \frac{1}{\alpha}\big\vert (\omega_1\alpha-\ell)-(\omega_2\alpha-\ell)\big\vert<\vert \omega_1\alpha-\ell\vert+\vert \omega_2\alpha-\ell\vert=O\big(\frac{1}{V}\big).\] Now, \begin{align*}\vert a_{i_1}-a_{i_2}\vert&=\frac{\vert (\omega_1a_{i_1}-m)-(\omega_2a_{i_2}-m)+(\omega_2-\omega_1)a_{i_2}\vert}{\vert \omega_1\vert}=O\big(\frac{a_{i_2}}{V\vert \omega_1\vert}\big)\\&=O\big(\frac{a_{i_1}a_{i_2}}{V\vert m\vert}\big)=O\big(\frac{A^2}{V\vert m\vert}\big)\end{align*} since $\vert \omega_1\vert\gg \frac{\vert m\vert}{a_{i_1}}$, using the fact that $m$ is non-zero. By assumption, for all $\epsilon\geq \frac{1}{n}$, the number of $i$ such that $\vert a_i-\beta\vert\leq \epsilon$ is $O(\epsilon n)$ regardless of $\beta$. Let $\epsilon=\frac{A^2}{V\vert m\vert}$. Since $\vert m\vert=O(a_{i_1}\omega_1)=O(AW)$, it follows that \[\epsilon\gg\frac{A}{VW}\gg \frac{A}{n}>\frac{1}{n}.\] Therefore, $\bold{K}_{\ell,m}$ has $O(\frac{A^2n}{V\vert m\vert})$ elements. Hence, setting $\bold{K}=\cup_{\ell,m}\bold{K}_{\ell,m}$, we have that \begin{align*}\vert \bold{K}\vert&=\sum_{\ell=O(AW)}\sum_{\substack{m=O(AW)\\ m\neq 0}}\vert\bold{K}_{\ell,m}\vert=O\big(AW\log(AW)\frac{A^2n}{V}\big)\\&=O\big(\frac{A^3W\log(AW)n}{V}\big)\end{align*} as promised.
\end{proof}
Let $n=Y$ and $A=2\log Y$. Let $V=e^{\frac{\eta y}{(4+2\Delta)\log y}}$ and $W=e^{\frac{\eta y}{(4+4\Delta)\log y}}$. Let $a_k=\log(d(Y+k-1)+1)$ and $a'_k=\log(d'(Y+k-1)+1)$ for $1\leq k\leq n$. The Parallel Condition holds with $\alpha=\log d-\log d'$ since \begin{align*}\log(d(Y+k-1)+1)-\log(d'(Y+k-1)+1)&=\log\Big(\frac{d(Y+k-1)+1}{d'(Y+k-1)+1}\Big)\\&=\log\Big(\frac{d}{d'}+O(\frac{1}{d'Y})\Big)\\&=\alpha+O\big(\frac{1}{nd'}\big).\end{align*} Furthermore, if we define $b_i=a_{i+1}-a_i$ for $1\leq i<n$ then we see that $b_i$ is a decreasing sequence with $b_1<3b_{n-1}$, while $a_i$ is an increasing sequence with $a_n>a_1+\frac{1}{2}$. From these two facts, it follows that the average value of $b_i$ is greater than $\frac{1}{2n}$ and the smallest value of $b_i$ is greater than $\frac{1}{6n}$. This shows that $\{a_i\}$ satisfies the Smoothness Condition. Since our values of $V$ and $W$ satisfy the hypotheses of Lemma \ref{clustering}, we obtain the bound \[\vert\bold{U}_j\vert=\binom{M}{2}O\big(\frac{W\log(W\log Y)\log^3(Y)Y}{V}\big)=o\big(\frac{Y}{(\log Y)^M}\big),\] remembering that $M$ is fixed. Therefore, removing the elements of $\bold{U}_j$ from $\bold{T}_j$ still leaves $(1-o(1))\frac{Y}{(\log Y)^M}$ elements.

Now, so far all this work has been done for a particular choice $j$, but there are $\frac{\vert \bold{D}_{\bold{P}}\vert}{2M}-1$ other values of $j$ which could have been chosen. This gives in total $(1-o(1))\frac{\vert \bold{D}_{\bold{P}}\vert Y}{2M(\log Y)^M}$ pairs $(k,j)$ with the following properties.
\begin{enumerate}
\item $k\notin \bold{U}_j$
\item $(k,L)=1$
\item $Y\leq k<2Y$
\item There exist distinct elements of $\bold{S}_j$, which we will call $d$ and $d'$, such that $dk+1$ and $d'k+1$ are both prime.
\end{enumerate}
Hence, the pigeonhole principle tells us that there exists $k_0$ between $Y$ and $2Y$ relatively prime to $L$ with $(1-o(1))\frac{\vert \bold{D}_{\bold{P}}\vert}{2M(\log Y)^M}$ pairs of ``non-clustering'' primes, each of the form $dk+1$ for $d$ dividing $L$. Let $\bold{Q}$ be the set of these pairs. Then, \[\vert \bold{Q}\vert\geq (1-o(1))\frac{\vert \bold{D}_{\bold{P}}\vert}{2M(\log Y)^M}\geq (1-o(1))\frac{2^{\frac{\eta y}{\log y}}}{2M(\log Y)^M}>e^{\frac{\eta y}{2\log y}}.\] 
\section{Building Carmichael Numbers}
In this section, we will construct Carmichael numbers of controlled size. We start with an application of Fourier analysis.
\begin{theorem}
\label{Fourier}
Let $q_1,q_2,\ldots,q_N$ be a set of primes with product $Q$. Label the largest of the primes $q_0$. Assume that $q_i^2\geq q_0$ for all $i$. Let $\ell$ be some positive integer which is not divisible by any of the $q_i$. Assume that \[N>\max\Big(\log q_0,\lambda(\ell)\log\big(\phi(\ell)\big)\Big)^5.\] Further assume that for all $\omega$ with $\frac{2}{\lambda(\ell)\log q_0}<\vert\omega\vert<4A\log\Phi$ and $\alpha$ with $\vert\alpha\vert\geq \frac{2\pi}{\lambda(\ell)}$, at most half of the primes $q_i$ satisfy \begin{equation}\label{nocluster}\vert \omega\log q_i-k\alpha\vert<8\sqrt{\frac{\log \Phi}{N}}\end{equation} for some integer $k$, where $A$ is some real number greater than 1 and \[\Phi=(AN\phi(\ell)\log q_0)^2.\] For all sufficiently large $N$, the number of values of $d$ which divide $Q$, while satisfying both $d\equiv 1\pmod{\ell}$ and $\vert\log d-\frac{\log Q}{2}-B\vert\leq\frac{1}{2A}$, is $2^{N-O(\sqrt{N}+\log\Phi)}$ for all $B\leq \frac{\sqrt{N}\log q_0}{36}$.
\end{theorem}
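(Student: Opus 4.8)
The plan is to realize the divisors $d\mid Q$ as subsets $S\subseteq\{1,\dots,N\}$, so that $\log d_S=\sum_{i\in S}\log q_i$ and $d_S\bmod L$ is the product of the corresponding residues, and to count the $S$ with $d_S\equiv 1\pmod L$ and $|\log d_S-c|<\tfrac1{2A}$ (where $c=\tfrac12\log Q+B$) by harmonic analysis. I would detect the congruence with orthogonality, $\mathbbm 1[d_S\equiv 1]=\tfrac1{\phi(L)}\sum_{\chi\bmod L}\chi(d_S)$, and detect a lower bound for the indicator of the short interval with a band‑limited Beurling--Selberg minorant $\varphi\le\mathbbm 1_{(-1/2A,\,1/2A)}$ whose Fourier transform is supported in $[-T,T]$, $T=4A\log\Xi$, with $\int\varphi\asymp A^{-1}$ and $\|\widehat\varphi\|_\infty\ll A^{-1}$. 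Writing $\varphi(t)=\int\widehat\varphi(\omega)e^{i\omega t}\,d\omega$ and interchanging sums with the integral, the count is at least
\[
\frac1{\phi(L)}\sum_{\chi\bmod L}\int_{-T}^{T}\widehat\varphi(\omega)\,e^{-i\omega c}\prod_{i=1}^{N}\bigl(1+\chi(q_i)e^{i\omega\log q_i}\bigr)\,d\omega ,
\]
and the whole point is that the $\chi=\chi_0$ piece, restricted to very low frequencies, should swamp everything else.

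The nontrivial input is \eqref{nocluster}. Each factor has modulus $\bigl|1+\chi(q_i)e^{i\omega\log q_i}\bigr|=2\bigl|\cos\tfrac12\bigl(\omega\log q_i+\arg\chi(q_i)\bigr)\bigr|$, and since $\arg\chi(q_i)$ lies in $\tfrac{2\pi}{\operatorname{ord}\chi}\mathbb Z\subseteq\alpha\mathbb Z$ for a suitable $\alpha$ with $|\alpha|\ge 2\pi/\lambda(L)$ (taking $\alpha=2\pi$ for $\chi_0$), this modulus is $\le 2\cos\!\bigl(4\sqrt{\log\Xi/N}\bigr)$ unless $\omega\log q_i$ lies within $8\sqrt{\log\Xi/N}$ of $\alpha\mathbb Z$. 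By \eqref{nocluster}, for every $\omega$ with $\tfrac1{\lambda(L)\log q_0}<|\omega|<4A\log\Xi$ at least $N/2$ of the indices $i$ avoid this, so
\[
\Bigl|\prod_{i=1}^N\bigl(1+\chi(q_i)e^{i\omega\log q_i}\bigr)\Bigr|\le 2^N\bigl(\cos\!\bigl(4\sqrt{\log\Xi/N}\bigr)\bigr)^{N/2}\le 2^N\Xi^{-3}
\]
for \emph{every} character $\chi$. Hence the contribution of all frequencies $\tfrac1{\lambda(L)\log q_0}\le|\omega|\le T$ is at most $\|\widehat\varphi\|_\infty\cdot 2T\cdot 2^N\Xi^{-3}\ll 2^N\Xi^{-2}$, which is negligible. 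Writing $g(t)=\int_{|\omega|<1/(\lambda(L)\log q_0)}\widehat\varphi(\omega)e^{i\omega t}\,d\omega$ for the low‑frequency part of $\varphi$, the count is therefore at least $\sum_{d\mid Q,\ d\equiv 1\,(L)}g(\log d-c)\;-\;O\!\bigl(2^N\Xi^{-2}\bigr)$.

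It remains to bound $\sum_{d\mid Q,\ d\equiv 1\,(L)}g(\log d-c)$ from below, and this is the real obstacle. The function $g$ is even and real, is $\asymp\int\varphi$ near $0$, and has width about $\lambda(L)\log q_0$, so what is needed is a good lower bound on the number of divisors $d\mid Q$ with $d\equiv 1\pmod L$ and $\log d$ within $O(\lambda(L)\log q_0)$ of $c$. To produce these I would peel off a sub‑collection $\mathcal R$ of the $q_i$ of size $\lceil\sqrt N\rceil$ — possible since a generating subset has size $\le\log_2\phi(L)\le\sqrt N$ (working inside the subgroup generated by the $q_i$ if they do not generate $(\mathbb Z/L\mathbb Z)^\times$, which only helps) — and invoke the Alford--Granville--Pomerance‑style combinatorial lemma: because $|\mathcal R|\gg\lambda(L)\log\phi(L)$, every residue in $(\mathbb Z/L\mathbb Z)^\times$ is a subset‑product of $\mathcal R$, so any subset $S'$ of the remaining primes can be completed, inside $\mathcal R$, to a subset $S$ with $d_S\equiv 1\pmod L$ and $\log d_S=\log d_{S'}+O(\sqrt N\log q_0)$. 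A character‑free anti‑concentration (local central limit) estimate for $\sum_{i\notin\mathcal R}\varepsilon_i\log q_i$ — which again uses \eqref{nocluster} to keep $\log d_{S'}$ off any coarse lattice — gives $\gg 2^{N-\sqrt N}/(A\sqrt N\log q_0)$ such subsets with $\log d_{S'}$ in any prescribed window of length $1/A$ in the bulk, and a pigeonhole/equidistribution argument over the $O(A\sqrt N\log q_0)$ such windows inside the relevant range then forces $\gg 2^{N-\sqrt N}/(A^2\log^2 q_0)$ of the completed subsets into the specific interval $(c-\tfrac1{2A},\,c+\tfrac1{2A})$. The hypotheses $N>(\log q_0)^5$ and $N>(\lambda(L)\log\phi(L))^5$ enter precisely in balancing $T$, $\Xi$, and $|\mathcal R|$; and the part I expect to be most delicate is controlling the coupling between the residue condition and the size condition — making sure the unknown shift $\log d_S-\log d_{S'}$ coming from the completion does not destroy the equidistribution.
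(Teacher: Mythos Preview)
Your harmonic-analysis setup and your treatment of the range $|\omega|\ge\tfrac{1}{\lambda(L)\log q_0}$ via \eqref{nocluster} are essentially what the paper does (its $J_2^\chi$ estimate); the Beurling--Selberg minorant in place of the paper's Gaussian is cosmetic. The gap is exactly where you flag it: the low-frequency non-trivial characters, equivalently the coupling between residue and size. Your completion argument is circular. After peeling off $\mathcal R$ and fixing for each residue $r$ a completing set $C_r\subseteq\mathcal R$ with shift $\sigma_r=\sum_{i\in C_r}\log q_i$, you need subsets $S'$ of the remaining primes with $\log d_{S'}$ in the window $(c-\sigma_r-\tfrac1{2A},\,c-\sigma_r+\tfrac1{2A})$ \emph{and} $d_{S'}\equiv r\pmod L$ simultaneously --- but that joint requirement is the theorem itself, now for $N-|\mathcal R|$ primes. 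A bare pigeonhole over the $O(A\sqrt N\log q_0)$ windows produces only \emph{some} good window, not the specific one at $c$; the ``equidistribution'' you invoke is doing all the work and is never established. (There is also a secondary issue: your kernel $g$ is sinc-like and takes negative values, so a count of $d\equiv 1$ with $\log d$ near $c$ does not by itself lower-bound $\sum_{d\equiv 1}g(\log d-c)$ without a matching upper bound on divisors in the negative lobes.)

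The paper's device for this step is different and is the heart of the proof. Rather than count directly in a fixed residue class, it passes from $(\mathbb Z/L\mathbb Z)^\times$ down a chain $G_0\supset G_1\supset\cdots\supset G_m=G$, at each stage discarding the (few) primes whose residues already lie in a proper subgroup that captures almost all of them, and stops at a $G$ with the property that \emph{every} proper subgroup of $G$ omits at least $\lambda(G)^2\log(n|G|)$ of the surviving residues $g_j$; the hypothesis $N>(\lambda(L)\log\phi(L))^5$ guarantees that at most $\tfrac{\sqrt N}{6}$ primes are discarded in total. Now for any non-trivial $\chi$ on $G$ and any $|\omega|\le\tfrac{1}{\lambda(G)r_0}$, at least $\lambda(G)^2\log(n|G|)$ indices have $\chi(g_j)\neq 1$, and since $|\omega r_j|\le\lambda(G)^{-1}$ each such factor has modulus at most $1-\lambda(G)^{-2}$; hence the whole product is at most $(n|G|)^{-1}$, and summing over the $|G|$ characters still leaves this negligible against the main term from $\chi_0$ at $\omega\approx 0$. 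This subgroup reduction --- engineering the group so that non-trivial characters are \emph{automatically} small at low frequency --- is the missing idea in your outline.
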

\begin{proof}
Let $H=(\mathbb{Z}/\ell\mathbb{Z})^{\times}$. Let $h_i$ be the element of $H$ corresponding to $q_i$. We will construct a sequence of nested groups starting with $H$, which we relabel $G_0$. Starting with $j=1$ and then increasing $j$ by increments of one, let $G_j$ be a proper subgroup of $G_{j-1}$ such that \[\#\{i: h_i\in G_{j-1}\setminus G_j\}<\lambda(G_{j-1})^2\log \big(\#\{i: h_i\in G_{j-1}\}\big\vert G_{j-1}\big\vert\big),\] with the process stopping when no such proper subgroup exists. The result is a chain of proper subgroups \[G_0\supset G_1\supset \cdots \supset G_m\] where for every proper subgroup of $G_m$, at least $\lambda(G_m)^2\log (n\vert G_m\vert)$ of the $h_i$ are in $G_m$ but not in the proper subgroup (letting $n$ be the number of $i$ such that $h_i$ is in $G_m$). At each step in this process, at most $\lambda(G_0)^2\log (N\vert G_0\vert)$ of the $h_i$ are discarded. Additionally, $m$ is bounded by $2\log \vert G_0\vert$. As a result \begin{align*}N-n&<2\lambda(G_0)^2\log(N\vert G_0\vert)\log \vert G_0\vert<2\lambda(\ell)^2\log^2(\phi(\ell))\log N\\&<2N^{\frac{2}{5}}\log N<\frac{\sqrt{N}}{6}.\end{align*} In particular, $n>\frac{5N}{6}$. Let $q_{a_1}, q_{a_2}, \ldots, q_{a_{N-n}}$ be the primes which were discarded, and let $p_1, p_2, \ldots, p_n$ be the remaining primes, the largest of which we will call $p_0$. Let $b$ be the recentered version of $B$, that is, \[b=B+\frac{\sum_{i=1}^{N-n}\log q_{a_i}}{2}.\] Then \begin{align*}\vert b\vert&\leq \vert B\vert+\frac{\sum_{i=1}^{N-n}\log q_{a_i}}{2}\leq \vert B\vert+\frac{\sqrt{N}\log q_0}{12}\leq \frac{\sqrt{N}\log q_0}{9}\\&\leq \frac{2\sqrt{N}\log p_0}{9}<\frac{\sqrt{n}\log p_0}{4}.\end{align*}

In what follows, we will refer to $G_m$ as $G$, and we will write it additively, despite its origin as a subgroup of a multiplicative group. We now introduce some final notation before the main part of the proof begins. Let $K$ be the interval $[b-\frac{1}{2A},b+\frac{1}{2A}]$. Let $P=\prod_i p_i$. Let $r_i=\frac{\log p_i}{2}$ and $r_0=\frac{\log p_0}{2}$. Let $g_i$ be the representative of $p_i$ in $G$. Let $\chi_0$ be the trivial character of $G$. Let $X_i$ be the random variable on the space $G\times \mathbb{R}$ which is $(0,-r_i)$ or $(g_i,r_i)$ with equal probability.

Now, we wish to show that \[\text{Pr}_{d\mid P}\Big(\log d-\frac{\log P}{2}\in K,\text{ }\sum_{i: \text{ }p_i\mid d}g_i=0\Big)\] is positive, where $\text{Pr}_{d\mid P}$ denotes the probability of a random divisor of $P$ satisfying some condition. Let $\mathbbm{1}_e$ be the characteristic function of the identity in $G$ and $\mathbbm{1}_K$ be the characteristic function of the interval $K$ in $\mathbb{R}$. Note that for every $d$ that divides $P$, \[\sum_{i: \text{ }p_i\mid d}r_i+\sum_{i: \text{ }p_i\nmid d}(-r_i)=\sum_{i: \text{ }p_i\mid d}\log p_i-\frac{1}{2}\sum_i\log p_i=\log d-\frac{\log P}{2}.\] Therefore, we can write \[\text{Pr}_{d\mid P}\Big(\log d-\frac{\log P}{2}\in K,\text{ }\sum_{i: \text{ }p_i\mid d}g_i=0\Big)=\mathbb{E}\Big(\mathbbm{1}_{e\times K}\big(\sum_jX_j\big)\Big)\] where $\mathbbm{1}_{e\times K}=(\mathbbm{1}_e,\mathbbm{1}_K)$, in other words, $\mathbbm{1}_{e\times K}$ acts on $G$ as $\mathbbm{1}_e$ and acts on $\mathbb{R}$ as $\mathbbm{1}_K$. We are about to take a Fourier transform, so we will replace $\mathbbm{1}_K$ with a Gaussian, making the higher frequencies easier to deal with. Let $\Psi(x)=e^{-a^2(x-b)^2}$ where $a=2A\sqrt{\log \Phi}$ and let $\Psi_e=(\mathbbm{1}_e,\Psi)$. Now, \[\mathbbm{1}_e(g)=\frac{1}{\vert G\vert}\sum_{\chi\in \hat{G}}\chi(g)\] while \[\hat{\Psi}(\omega)=\int_{-\infty}^{\infty}\Psi(x)e^{-i\omega x}dx=\frac{\sqrt{\pi}}{a}e^{-\frac{\omega^2}{4a^2}-ib\omega}\] so \[\Psi(x)=\frac{1}{2\pi}\int_{-\infty}^{\infty}\hat{\Psi}(\omega)e^{i\omega x}d\omega=\frac{1}{2a\sqrt{\pi}}\int_{-\infty}^{\infty}e^{-\frac{\omega^2}{4a^2}-ib\omega}e^{i\omega x}d\omega.\] Thus \[\Psi_e(x,g)=\frac{1}{2a\sqrt{\pi}\vert G\vert}\sum_{\chi}\int_{-\infty}^{\infty}e^{-\frac{\omega^2}{4a^2}-ib\omega}\chi_{\omega}(x,g)d\omega\] where $\chi_{\omega}$ is the character on $G\times \mathbb{R}$ corresponding to $\chi$ on $G$ and $e^{i\omega x}$ on $\mathbb{R}$, that is, $\chi_{\omega}=\chi\boxtimes e^{i\omega x}$. Let \[C=2a\sqrt{\pi}\vert G\vert\] and \[E=\mathbb{E}\Big(\Psi_e\big(\sum_jX_j\big)\Big).\] Note that \[\mathbb{E}(\chi_{\omega}(X_j))=\frac{e^{i\omega r_j}\chi(g_j)+e^{-i\omega r_j}}{2}.\] We now claim that \begin{align*}CE&=C\mathbb{E}\Big(\frac{1}{2a\sqrt{\pi}\vert G\vert}\sum_{\chi}\int_{-\infty}^{\infty}e^{-\frac{\omega^2}{4a^2}-ib\omega}\chi_{\omega}\Big(\sum_j X_j\Big)d\omega\Big)\\&=\mathbb{E}\Big(\sum_{\chi}\int_{-\infty}^{\infty}e^{-\frac{\omega^2}{4a^2}-ib\omega}\prod_j\big(\chi_{\omega}(X_j)\big)d\omega\Big)\\&=\sum_{\chi}\int_{-\infty}^{\infty}e^{-\frac{\omega^2}{4a^2}-ib\omega}\prod_j\Big(\frac{e^{i\omega r_j}\chi(g_j)+e^{-i\omega r_j}}{2}\Big)d\omega\\&=\Re\bigg(\sum_{\chi}\int_{-\infty}^{\infty}e^{-\frac{\omega^2}{4a^2}-ib\omega}\prod_j\Big(\frac{e^{i\omega r_j}\chi(g_j)+e^{-i\omega r_j}}{2}\Big)d\omega\bigg).\end{align*} The second equality follows since the random variables $X_j$ are independent, while in the next step, we interchange the expectation operator with the integral and the product. (Note that the integrand has absolute value bounded by a Gaussian and is therefore absolutely convergent.) Finally, the last equality follows since $C$ and $E$ are both real.

We will show that the main contribution to the above expression comes from when $\chi=\chi_0$ and $\omega$ is very small. The challenge is to show that every other contribution is small. The main way we do this is by showing that $\prod_j\Big(\frac{e^{i\omega r_j}\chi(g_j)+e^{-i\omega r_j}}{2}\Big)$ is small. When $\chi$ is non-trivial and $\omega$ is very small, this product will be small if we can show that $\chi(g_j)$ is sometimes not equal to 1. Fortunately, $G$ was constructed entirely for that purpose. Then as $\omega$ becomes larger (in absolute value), the set $\{e^{i\omega r_j}\}_{j=1}^n$ should spread out, providing cancellation. Once $\omega$ reaches a certain size, the contribution becomes negligible for a different reason, namely because of the $e^{-\frac{\omega^2}{4a^2}}$ term.

Now, for values of $c_1$, $c_2$, $c_3$, and $c_4$ which will be chosen shortly, we can write that \[CE>2\Big(I_1+I_2-I_3-\sum_{\chi\neq \chi_0}J_1^{\chi}-\sum_{\chi}(J_2^{\chi}+J_3^{\chi})\Big)\] where \[I_1=\int_0^{c_1}e^{-\frac{\omega^2}{4a^2}}\cos(b\omega)\prod_j\cos(\omega r_j)d\omega,\] \[I_2=\int_{c_1}^{c_2}e^{-\frac{\omega^2}{4a^2}}\cos(b\omega)\prod_j\cos(\omega r_j)d\omega,\] and \[I_3=\int_{c_2}^{c_3}\prod_j\big\vert\cos(\omega r_j)\big\vert d\omega,\] while \[J_1^{\chi}=\max_{s\in \{\pm 1\}}\int_0^{c_3}\prod_j\Big\vert\frac{e^{is\omega r_j}\chi(g_j)+e^{-is\omega r_j}}{2}\Big\vert d\omega,\] \[J_2^{\chi}=\max_{s\in \{\pm 1\}}\int_{c_3}^{c_4}\prod_j\Big\vert\frac{e^{is\omega r_j}\chi(g_j)+e^{-is\omega r_j}}{2}\Big\vert d\omega,\] and \[J_3^{\chi}=\int_{c_4}^{\infty}e^{-\frac{\omega^2}{4a^2}}d\omega.\] (Note that the maximum over $s$ in $J_1^{\chi}$ and $J_2^{\chi}$ comes from having to account for the contribution from negative values of $\omega$.)

Again, the claim is that the main contribution comes from $I_1$. We take $c_1=\frac{1}{r_0\sqrt{n}}$, which will be small enough to ensure that the minimum value of the integrand over that range will still be large. Then $c_2$ will be chosen to be $\frac{3}{r_0\sqrt{n}}$, ensuring that $I_2$ is at least positive. Next, $c_3=\frac{1}{\lambda(G)r_0}$. (Since $n>9\lambda(G)^2$, $c_2$ is indeed less than $c_3$.) Elementary methods will show that $I_3$ is small compared to $I_1$. Then $J_1^{\chi}$ can be shown to be quite small because of the work done previously in making sure that there is no proper subgroup of $G$ which contains almost all of the $g_i$. Bounding $J_3^{\chi}$ will be quite easy as long as $c_4$ is large enough for the Gaussian term to become vanishingly small. To that end, we take $c_4=4A\log \Phi$. That leaves the task of bounding $J_2^{\chi}$. A priori, this part of the integral is very difficult to control because it depends on there being no grand conspiracy in the choice of our primes. This is where our non-clustering hypothesis \eqref{nocluster} comes in.

We will start with a lower bound on $I_1$. Let $0\leq \omega\leq c_1$. By assumption, $A>1$ so $a>1$. Therefore, $e^{-\frac{\omega^2}{4a^2}}>\frac{99}{100}$ for large $n$. Also, \[\cos(b\omega)\geq \cos(\frac{1}{2})>\frac{5}{6}\] since \[\vert b\omega\vert\leq \frac{\sqrt{n}\log p_0}{4r_0\sqrt{n}}=\frac{1}{2}.\] Now \[\cos(\omega r_j)\geq 1-\frac{\omega^2 r_j^2}{2}\geq 1-\frac{1}{2n}\] so \[\prod_{j=1}^n(\cos(\omega r_j))\geq \Big(1-\frac{1}{2n}\Big)^n>\frac{20}{33}\] for large $n$. Hence \[I_1=\int_0^{c_1}e^{-\frac{\omega^2}{4a^2}}\cos(b\omega)\prod_j\big(\cos(\omega r_j)\big)d\omega>\int_0^{c_1}\frac{1}{2}d\omega=\frac{1}{2r_0\sqrt{n}}.\]

Now, the integrand \[e^{-\frac{\omega^2}{4a^2}}\cos(b\omega)\prod_j(\cos(\omega r_j))\] will be positive if $\vert b\omega\vert<\frac{\pi}{2}$ (which will be true if $\vert \omega\vert<\frac{\pi}{r_0\sqrt{n}}$) and $\vert \omega r_j\vert<\frac{\pi}{2}$ for all $j$ (which will be true if $\vert \omega\vert<\frac{\pi}{2r_0}$). Since $c_2=\frac{3}{r_0\sqrt{n}}$, it follows that $I_2>0$.

Now suppose $c_2\leq \vert\omega\vert\leq c_3$. Then $\vert\omega r_j\vert\leq 1$ for all $j$. Hence \[I_3=\int_{c_2}^{c_3}\prod_j\big\vert\cos(\omega r_j)\big\vert d\omega<\int_{c_2}^{c_3}\Big(1-\frac{\omega^2r_0^2}{4}\Big)^nd\omega<\int_{c_2}^{c_3}e^{-\frac{n\omega^2r_0^2}{4}}d\omega.\] Letting $u=\frac{\sqrt{n}\omega r_0}{2}$, \[I_3<\frac{2}{r_0\sqrt{n}}\int_{\frac{3}{2}}^{\frac{\sqrt{n}}{2\lambda(G)}}e^{-u^2}du<\frac{2}{r_0\sqrt{n}}\int_{\frac{3}{2}}^{\infty}e^{-u^2}du=\frac{\sqrt{\pi}}{r_0\sqrt{n}}\text{erfc}\big(\frac{3}{2}\big)<\frac{1}{5r_0\sqrt{n}},\] where \[\text{erfc}(x)=\frac{2}{\sqrt{\pi}}\int_x^{\infty}e^{-t^2}dt\] with the elementary bound $\text{erfc}(x)\leq e^{-x^2}$ for $x\geq 0$.

Let $\chi$ be any non-trivial character. Take $s$ to be either $-1$ or $1$. Let $0\leq \vert\omega\vert\leq c_3$. Note that \[\Big\vert \frac{\chi(g_j)+e^{-2is\omega r_j}}{2}\Big\vert=\sqrt{\frac{1+\Re(\chi(g_j)e^{2is\omega r_j})}{2}}=\sqrt{\frac{1+\cos(\frac{2\pi k_j}{\lambda(G)}+2s\omega r_j)}{2}}\] where $-\frac{\lambda(G)}{2}<k_j\leq \frac{\lambda(G)}{2}$ is the integer such that $\chi(g_j)=e^{\frac{2\pi ik_j}{\lambda(G)}}$. Indeed, such an integer exists since the order of $g_j$ divides $\lambda(G)$, so $\chi(g_j)^{\lambda(G)}=1$. If $\vert k_j\vert\geq 1$ then \[\Big\vert \frac{2\pi k_j}{\lambda(G)}+2s\omega r_j\Big\vert\geq \Big\vert \frac{2\pi k_j}{\lambda(G)}\Big\vert-\Big\vert 2s\omega r_j\Big\vert\geq \frac{2\pi}{\lambda(G)}-\frac{2\pi}{3\lambda(G)}=\frac{4\pi}{3\lambda(G)}>\frac{4}{\lambda(G)}\] because $\vert \omega r_j\vert\leq \frac{1}{\lambda(G)}<\frac{\pi}{3\lambda(G)}$. On the other hand, \[\Big\vert \frac{2\pi k_j}{\lambda(G)}+2s\omega r_j\Big\vert\leq \pi+\frac{2}{\lambda(G)}.\] Therefore, if $\vert k_j\vert\geq 1$ then the distance between $\frac{2\pi k_j}{\lambda(G)}+2s\omega r_j$ and the nearest multiple of $2\pi$ is greater than $\frac{4}{\lambda(G)}$ because then $\lambda(G)\geq 2$ and even in the extreme case that $\lambda(G)=2$, \[2\pi-\big(\pi+\frac{2}{\lambda(G)}\big)=\pi-1>2=\frac{4}{\lambda(G)}.\] Hence, \begin{align*}\Big\vert \frac{\chi(g_j)+e^{-2is\omega r_j}}{2}\Big\vert&<\sqrt{\frac{1+\cos\big(\frac{4}{\lambda(G)}\big)}{2}}<\sqrt{\frac{1+1-\frac{4}{\lambda(G)^2}}{2}}\\&=\sqrt{1-\frac{2}{\lambda(G)^2}}<1-\frac{1}{\lambda(G)^2}\end{align*} if $\vert k_j\vert\geq 1$. Recall that $G$ was constructed so that for every proper subgroup of $G$, at least $\lambda(G)^2\log(n\vert G\vert)$ of the $g_j$ are outside it. The kernel of a non-trivial character is a proper subgroup. Hence, for at least $\lambda(G)^2\log(n\vert G\vert)$ values of $j$, $\chi(g_j)\neq 1$, or alternatively, $\vert k\vert\geq 1$. Therefore, since even when $k_j=0$, the corresponding term in the product is still at most 1, \begin{align*}J_1^{\chi}&=\max_{s\in \{-1,1\}}\int_0^{c_3}\prod_j\Big\vert\frac{e^{is\omega r_j}\chi(g_j)+e^{-is\omega r_j}}{2}\Big\vert d\omega\\&<\int_0^{c_3}\Big(1-\frac{1}{\lambda(G)^2}\Big)^{\lambda(G)^2\log(n\vert G\vert)}d\omega<\int_0^{c_3}e^{-\log(n\vert G\vert)}d\omega\\&=\frac{e^{-\log(n\vert G\vert)}}{\lambda(G)r_0}=\frac{1}{n\vert G\vert\lambda(G)r_0}<\frac{1}{100\vert G\vert r_0\sqrt{n}}.\end{align*}

Now let $\chi$ be any character. Seeing as $\lambda(G)\leq \lambda(\ell)$, the non-clustering hypothesis \eqref{nocluster} implies that for every \[\frac{2}{\lambda(G)\log p_0}\leq \vert \omega\vert\leq 4A\log\Phi,\] at least a third of the possible values of $j$ are such that \[\Big\vert 2\omega r_j-\frac{2\pi k}{\lambda(G)}\Big\vert\geq 8\sqrt{\frac{\log \Phi}{N}}>7\sqrt{\frac{6}{5N}\log \Phi}>7\sqrt{\frac{\log \Phi}{n}}\] for all integers $k$. (This would have been true for at least half of the original primes, but we have since discarded up to a sixth of the primes.) For $\omega$ in the same interval, it follows that letting $k$ be the integer that minimizes $2s\omega r_j-\frac{2\pi k}{\lambda(G)}$ for $s=1$, \[\cos\Big(2s\omega r_j-\frac{2\pi k}{\lambda(G)}\Big)<\cos\Big(7\sqrt{\frac{\log \Phi}{n}}\Big)<1-\frac{49\log \Phi}{4n}<1-\frac{12\log \Phi}{n}.\] Moreover, the same inequality holds for $s=-1$ and $k$ which minimizes $-2\omega r_j-\frac{2\pi k}{\lambda(G)}$ since cosine is even. We have thus established an upper bound on $\cos\big(2s\omega r_j-\frac{2\pi k}{\lambda(G)}\big)$ for all integers $k$ and both choices of $s$. Hence, it follows that in this range of $\omega$ between $c_3$ and $c_4$, \[\Big\vert \frac{\chi(g_j)+e^{-2is\omega r_j}}{2}\Big\vert=\sqrt{\frac{1+\cos(\frac{2\pi k_j}{\lambda(G)}+2s\omega r_j)}{2}}<\sqrt{1-\frac{6\log \Phi}{n}}<1-\frac{3\log \Phi}{n}\] where $k_j$ is chosen to be an integer such that $\chi(g_j)=e^{\frac{2\pi ik_j}{\lambda(G)}}$. Therefore \begin{align*}J_2^{\chi}&=\max_{s\in \{-1,1\}}\int_{c_3}^{c_4}\prod_j\Big\vert\frac{e^{is\omega r_j}\chi(g_j)+e^{-is\omega r_j}}{2}\Big\vert d\omega<\int_{c_3}^{c_4}\Big(1-\frac{3\log \Phi}{n}\Big)^{\frac{n}{3}}d\omega\\&<\int_{c_3}^{c_4}e^{-\log \Phi}d\omega<c_4e^{-\log \Phi}=\frac{4A\log \Phi}{\Phi}=\frac{4A}{\sqrt{\Phi}}\frac{\log\Phi}{\sqrt{\Phi}}\\&<\frac{4}{N\phi(\ell)\log q_0}<\frac{1}{100\vert G\vert r_0\sqrt{n}},\end{align*} recalling that $\sqrt{\Phi}=AN\phi(\ell)\log q_0$.

Finally, letting $u=\frac{\omega}{2a}$, \begin{align*}J_3^{\chi}&=\int_{c_4}^{\infty}e^{-\frac{\omega^2}{4a^2}}d\omega=2a\int_{\frac{c_4}{2a}}^{\infty}e^{-u^2}du=a\sqrt{\pi}\text{ erfc}\Big(\frac{c_4}{2a}\Big)\leq a\sqrt{\pi}e^{-\frac{c_4^2}{4a^2}}\\&=a\sqrt{\pi}e^{-\frac{16A^2\log^2\Phi}{16A^2\log\Phi}}=\frac{a\sqrt{\pi}}{\Phi}=\frac{2\sqrt{\pi}A}{\sqrt{\Phi}}\sqrt{\frac{\log\Phi}{\Phi}}\\&<\frac{2\sqrt{\pi}}{N\phi(\ell)\log q_0}<\frac{1}{100\vert G\vert r_0\sqrt{n}}.\end{align*}

Putting everything together, \[CE>\frac{2}{r_0\sqrt{n}}\Big(\frac{1}{2}-\frac{1}{5}-\frac{\vert G\vert}{100\vert G\vert}-\frac{\vert G\vert}{100\vert G\vert}-\frac{\vert G\vert}{100\vert G\vert}\Big)>\frac{1}{2r_0\sqrt{n}}\] so \[E>\frac{1}{2r_0\sqrt{n}}\frac{1}{2a\sqrt{\pi}\vert G\vert}>\frac{1}{15A\vert G\vert r_0\sqrt{n\log \Phi}},\] using that $a=2A\sqrt{\log \Phi}$. Now, \begin{align*}\text{Pr}_{d\mid P}\Big(\log d-\frac{\log P}{2}\in K,\text{ }\sum_{i: \text{ }p_i\mid d}g_i=0\Big)&=\mathbb{E}\Big(\mathbbm{1}_{e\times K}\big(\sum_jX_j\big)\Big)\\&=\mathbb{E}\Big(\Psi_e\big(\sum_jX_j\big)\Big)-\mathbb{E}\Big(\Psi_e\big(\sum_jX_j\big)-\mathbbm{1}_{e\times K}\big(\sum_jX_j\big)\Big)\\&\geq E-\max(\Psi_e-\mathbbm{1}_{e\times K})=E-e^{-\frac{a^2}{4A^2}}=E-e^{-\log\Phi}\\&>\frac{1}{15A\vert G\vert r_0\sqrt{n\log\Phi}}-\frac{1}{AN\phi(\ell)\log q_0\sqrt{\Phi}}\\&>\frac{1}{15A\vert G\vert r_0\sqrt{n\log\Phi}}-\frac{1}{100A\vert G\vert r_0\sqrt{n\log\Phi}}\\&>\frac{1}{18A\vert G\vert r_0\sqrt{n\log \Phi}}.\end{align*} Hence there are at least \[\frac{2^n}{18A\vert G\vert r_0\sqrt{n\log \Phi}}>\frac{2^n}{\Phi}=\frac{2^{N-O(\sqrt{N})}}{\Phi}\] values of $d$ dividing $P$ (and hence $Q$) such that $d\equiv 1\pmod{\ell}$ and \[\pushQED{\qed}\big\vert \log d-\frac{\log Q}{2}-B\big\vert=\big\vert \log d-\frac{\log P}{2}-b\big\vert\leq\frac{1}{2A}.\qedhere\popQED\]
\hphantom\qedhere
\end{proof}
The proof of this technical result out of the way, we now return to the main argument. In Section 3, we constructed a set $\bold{Q}$, consisting of pairs of ``non-clustering'' primes. These primes are going to be the prime divisors of our Carmichael numbers. 

Let $N_-=\lceil \frac{1}{4}e^{\frac{\eta y}{2\log y}}\rceil $, and let $N_+=\lfloor e^{\frac{\eta y}{2\log y}}\rfloor$. Recall that $\vert\bold{Q}\vert\geq N_+$. We label the first $N_+$ pairs in $\bold{Q}$ as $(q_1,q'_1),\ldots,(q_{N_+},q_{N_+}')$. For $N\leq N_+$, let $\bold{Q}_N$ be the set of primes of the form $q_i$ or $q'_i$ with $1\leq i\leq N$. Remember that these primes are all of the form $dk_0+1$ for some $d\mid L$ and hence are distinct from each other. Let $Q_N=\prod_{q\in \bold{Q}_N}q$. Let \[Z_-(y)=\Big\lceil e^{y^{2+2\Delta}}\Big\rceil^{\frac{1}{3}e^{\frac{\eta y}{2\log y}}}\] and \[Z_+(y)=\Big\lceil e^{y^{2+2\Delta}}\Big\rceil^{\frac{11}{12}e^{\frac{\eta y}{2\log y}}}.\] Let $z$ be an arbitrary integer between $Z_-(y)$ and $Z_+(y)$. First we claim that there exists an integer $n$ between $N_-$ and $N_+$ such that \[\big\vert \log z-\frac{\log Q_n}{2}\big\vert<\log Y.\] Indeed, recalling that $Y=\Upsilon L$ where $\Upsilon=\big\lceil e^{y^{2+2\Delta}}\big\rceil$ and \[L=\prod_{p\in\bold{P}}p<y^{\frac{2y}{\log y}}=e^{2y},\] it follows that \[Q_{N_+}>Y^{2e^{\frac{\eta y}{2\log y}}}>(Z_+(y))^2\] and \[Q_{N_-}<(2YL)^{\frac{1}{2}e^{\frac{\eta y}{2\log y}}}<\Upsilon^{\frac{2}{3}e^{\frac{\eta y}{2\log y}}}=(Z_-(y))^2,\] while \[\frac{\log Q_{N+1}-\log Q_N}{2}<\frac{\log((2LY)^2)}{2}=\log(2LY)<2\log Y\] for all $N<N_+$. Therefore, such $n$ does indeed exist. Let \[b=\log z-\frac{\log Q_n}{2}+\frac{1}{2e^{\frac{\eta y}{(4+6\Delta)\log y}}}.\] Then $\vert b\vert<\log Y+1$.
\begin{lemma}
\label{technical}
The hypotheses of Theorem \ref{Fourier} hold for $N=2n$, $B=b$, $\ell=L$, $A=e^{\frac{\eta y}{(4+6\Delta)\log y}}$, and $\{q_i\}=\bold{Q}_n$.
\end{lemma}
\begin{proof}
Since $L$ is small compared to $Y$, it is indeed true that for all $i$, $q_i^2\geq q_0$, where $q_0$ is the largest element of $\bold{Q}_n$. Also, $N>\log^5 q_0$. Letting $v_q(m)$ be the $p$-adic valuation of $m$, \begin{align}\label{lambda}\lambda(L)&\notag=\prod_{\substack{q\leq y^{1-E}\\ q\in\mathbb{P}}}\max_{p\in \bold{P}}q^{v_q(p-1)}\leq \prod_{\substack{q\leq y^{1-E}\\ q\in\mathbb{P}}}y\\&=y^{\pi(y^{1-E})}<y^{\frac{2y^{1-E}}{(1-E)\log y}}=e^{\frac{2y^{1-E}}{1-E}}.\end{align} Combining this with the fact that \[\log \phi(L)<\log L<\log \big(y^{\frac{2y}{\log y}}\big)=2y,\] it follows that \[N\geq \Big(\lambda(L)\log \phi(L)\Big)^5.\] Also, $\vert B\vert<\log Y+1<\frac{\sqrt{N}\log q_0}{36}$.

Now, considering how $\bold{Q}$ was constructed, at most half of the primes $q\in \bold{Q}_n$ are such that \[\min_{\substack{k\in \mathbb{Z}\\ k\neq 0}}\big(\vert \omega\log q-k\vert\big)\leq\frac{1}{e^{\frac{\eta y}{(4+2\Delta)\log y}}}\] for any particular $\omega$ with $\vert \omega\vert\leq e^{\frac{\eta y}{(4+4\Delta)\log y}}$. As a consequence of this and \eqref{lambda}, for all $\omega$ with $\vert \omega\vert\leq e^{\frac{\eta y}{(4+5\Delta)\log y}}$ and $\alpha\geq \frac{2\pi}{\lambda(L)}$, at most half of the primes $q\in \bold{Q}_n$ are such that \[\vert \omega\log q-k\alpha\vert\leq \frac{1}{e^{\frac{\eta y}{(4+\Delta)\log y}}}\] for some non-zero integer $k$. Additionally, if $\vert \omega\vert\geq \frac{2}{\lambda(L)\log q_0}$ then \[\vert \omega\log q\vert\geq \frac{1}{4\lambda(L)}>\frac{1}{4e^{\frac{2y^{1-E}}{1-E}}}>\frac{1}{e^{\frac{\eta y}{(4+\Delta)\log y}}}.\] Therefore, for $\frac{2}{\lambda(L)\log q_0}\leq \vert\omega\vert\leq e^{\frac{\eta y}{(4+5\Delta)\log y}}$ and $\alpha\geq \frac{2\pi}{\lambda(L)}$, at most half of the primes $q\in \bold{Q}_n$ are such that \[\vert \omega\log q-k\alpha\vert\leq \frac{1}{e^{\frac{\eta y}{(4+\Delta)\log y}}}\] for any integer $k$. Let $\Phi=(AN\phi(L)\log q_0)^2$. Then \[\log\Phi=2\big(\log A+\log N+\log \phi(L)+\log\log q_0\big)<4\Big(\frac{\eta y}{\log y}+y+\log\log Y\Big)<8y.\] Hence \[8\sqrt{\frac{\log\Phi}{N}}<\frac{32\sqrt{y}}{e^{\frac{\eta y}{4\log y}}}<\frac{1}{e^{\frac{\eta y}{(4+\Delta)\log y}}}\] and \[4A\log \Phi<32Ay<e^{\frac{\eta y}{(4+5\Delta)\log y}}.\] Therefore, for all $\omega$ with $\frac{2}{\lambda(L)\log q_0}\leq \vert \omega\vert\leq 4A\log\Phi$ and $\alpha\geq \frac{2\pi}{\lambda(L)}$, at most half of the primes $q\in \bold{Q}_n$ are such that \[\vert \omega\log q-k\alpha\vert<8\sqrt{\frac{\log \Phi}{N}}\] for some integer $k$. 
\end{proof}
Let $\Pi$ be one of the products guaranteed by the theorem. Each of its prime factors is congruent to 1 mod $k_0$. Hence $\Pi\equiv 1\pmod{k_0}$. By construction, it is also true that $\Pi\equiv 1\pmod{L}$. Since $(k_0,L)=1$, these can be combined: $\Pi\equiv 1\pmod{k_0L}$. Each prime factor of $\Pi$ looks like $dk_0+1$ for some $d\mid L$. It follows that $p-1\mid \Pi-1$ for every $p\mid \Pi$, which means that $\Pi$ is a Carmichael number. Also, \[\Big\vert \log \Pi-\log z-\frac{1}{2A}\Big\vert=\Big\vert \log \Pi-\frac{\log Q_n}{2}-B\Big\vert\leq\frac{1}{2A}.\] Therefore, there exist at least \[2^{2N_--O(\sqrt{N_+}+\log\Phi)}>e^{\frac{1}{3}e^{\frac{\eta y}{2\log y}}}\] Carmichael numbers with a logarithm in the interval $(\log z,\log z+\frac{1}{A})$. We now want to remove all mention of $y$ in the quantities in the preceding statement and write everything in terms of $z$. We have that \[z\leq \Big\lceil e^{y^{2+2\Delta}}\Big\rceil^{\frac{11}{12}e^{\frac{\eta y}{2\log y}}}\leq e^{e^{\frac{\eta y}{(2-\frac{\delta}{2})\log y}}}\] so recalling that $\Delta=\frac{\delta}{24}$, \[\frac{A}{2}=\frac{e^{\frac{\eta y}{(4+6\Delta)\log y}}}{2}>\frac{1}{2}(\log z)^{\frac{2-\frac{\delta}{2}}{4+\frac{\delta}{4}}}>(\log z)^{\frac{1}{2+\delta}}\] since our assumption that $\delta$ is small means that \[(2-\frac{\delta}{2})(2+\delta)=4+\delta-\frac{\delta^2}{2}>4+\frac{\delta}{4}.\] Therefore \[e^{\log z+\frac{1}{A}}<z\Big(1+\frac{2}{A}\Big)<z+\frac{z}{(\log z)^{\frac{1}{2+\delta}}}.\] Also, $\log z<y^{2+2\Delta}e^{\frac{\eta y}{2\log y}}$ and $\log\log z>y^{1-\Delta}$ so \[\frac{\log z}{(\log\log z)^{2+\delta}}<\frac{y^{2+2\Delta}e^{\frac{\eta y}{2\log y}}}{y^{(1-\Delta)(2+\delta)}}=e^{\frac{\eta y}{2\log y}}y^{4\Delta+\Delta\delta-\delta}=e^{\frac{\eta y}{2\log y}}y^{\frac{\delta^2}{24}-\frac{5\delta}{6}}<\frac{1}{3}e^{\frac{\eta y}{2\log y}},\] which means that $e^{\frac{1}{3}e^{\frac{\eta y}{2\log y}}}>e^{\frac{\log z}{(\log\log z)^{2+\delta}}}$. Therefore, there are at least $e^{\frac{\log z}{(\log\log z)^{2+\delta}}}$ Carmichael numbers in the interval $(z,z+\frac{z}{(\log z)^{\frac{1}{2+\delta}}})$ for all $z$ between $Z_-(y)$ and $Z_+(y)$. Note that \[Z_-(y+1)=\Big\lceil e^{(y+1)^{2+2\Delta}}\Big\rceil^{\frac{1}{3}e^{\frac{\eta(y+1)}{2\log(y+1)}}}<e^{\frac{101}{100}\frac{e}{3}y^{2+2\Delta}e^{\frac{\eta y}{2\log y}}}<e^{\frac{11}{12}y^{2+2\Delta}e^{\frac{\eta y}{2\log y}}}\leq Z_+(y).\] As a result, the intervals $[Z_-(y),Z_+(y)]$ and $[Z_-(y+1),Z_+(y+1)]$ overlap. Hence, sliding $y$ to infinity by increments of one, we conclude that for sufficiently large $z$, there are at least $e^{\frac{\log z}{(\log\log z)^{2+\delta}}}$ Carmichael numbers in the interval $(z,z+\frac{z}{(\log z)^{\frac{1}{2+\delta}}})$. As a final remark, we note that Theorem \ref{BV} is effective and therefore so is our main result.
\section{Acknowledgements}
The author would like to acknowledge a helpful letter from James Maynard and a number of useful conversations with Michael Larsen. The author would also like to thank Andrew Granville, Simon H\"{o}ra, and the anonymous referee for their many helpful comments and suggestions. 

\end{document}